\documentclass[11pt]{amsart}

\usepackage{amsmath}
\usepackage{amssymb}
\usepackage{amscd}
\usepackage{mathrsfs}

\usepackage{xypic}
\xyoption{all}

\topmargin=-1.5cm
\oddsidemargin=-1cm
\evensidemargin=-.5cm
\textwidth=17.5cm
\textheight=23.5cm

\newcommand{\nc}{\newcommand}

\nc{\lan}{\big\langle}
\nc{\ran}{\big\rangle}

\nc{\kk}{{\mathbb{C}}}

\nc{\LL}{{\mathbb{L}}}
\nc{\PP}{{\mathbb{P}}}
\nc{\QQ}{{\mathbb{Q}}}
\nc{\RR}{{\mathbb{R}}}
\nc{\TT}{{\mathbb{T}}}
\nc{\ZZ}{{\mathbb{Z}}}

\nc{\CA}{{\mathcal{A}}}
\nc{\CB}{{\mathcal{B}}}
\nc{\D}{{\mathcal{D}}}
\nc{\CE}{{\mathcal{E}}}
\nc{\CI}{{\mathcal{I}}}
\nc{\CK}{{\mathcal{K}}}
\nc{\CL}{{\mathcal{L}}}
\nc{\CM}{{\mathcal{M}}}
\nc{\CN}{{\mathcal{N}}}
\nc{\CO}{{\mathcal{O}}}
\nc{\CS}{{\mathcal{S}}}
\nc{\CT}{{\mathcal{T}}}
\nc{\CU}{{\mathcal{U}}}

\nc{\BB}{{\mathbf{B}}}
\nc{\BD}{{\mathbf{D}}}
\nc{\BG}{{\mathbf{G}}}
\nc{\BO}{{\mathbf{O}}}
\nc{\BP}{{\mathbf{P}}}
\nc{\BT}{{\mathbf{T}}}
\nc{\bm}{{\mathbf{m}}}
\nc{\bu}{{\mathbf{u}}}
\nc{\bx}{{\mathbf{x}}}

\nc{\SJ}{{\mathsf{J}}}

\nc{\TBP}{{\tilde{\BP}}}

\nc{\TD}{{\widetilde{\D}}}
\nc{\TCA}{{\tilde{\CA}}}
\nc{\TY}{{\widetilde{Y}}}

\nc{\RHom}{\mathop{\mathsf{RHom}}\nolimits}
\nc{\Hom}{\mathop{\mathsf{Hom}}\nolimits}
\nc{\Ext}{\mathop{\mathsf{Ext}}\nolimits}
\nc{\Tor}{\mathop{\mathsf{Tor}}\nolimits}
\nc{\Pic}{\mathop{\mathsf{Pic}}\nolimits}
\renewcommand{\Im}{\mathop{\mathsf{Im}}\nolimits}
\nc{\Br}{\mathop{\mathsf{Br}}\nolimits}
\nc{\Cone}{\mathop{\mathsf{Cone}}\nolimits}
\nc{\codim}{\mathop{\mathsf{codim}}\nolimits}
\nc{\sing}{{\mathsf{sing}}}
\nc{\perf}{{\mathsf{perf}}}
\nc{\supp}{{\mathsf{supp}}}
\nc{\Ker}{{\mathsf{Ker}}}
\nc{\Coker}{{\mathsf{Coker}}}
\nc{\End}{{\mathsf{End}}}
\nc{\Bl}{{\mathsf{Bl}}}
\nc{\Tr}{\mathop{\mathsf{Tr}}}
\nc{\Pf}{{\mathsf{Pf}}}
\nc{\Gr}{{\mathsf{Gr}}}
\nc{\SGr}{{\mathsf{SGr}}}
\nc{\LGr}{{\mathsf{LieGr}}}
\nc{\GTGr}{{\BG_2\mathsf{Gr}}}
\nc{\OGr}{{\mathsf{OGr}}}
\nc{\OFl}{{\mathsf{OFl}}}
\nc{\Fl}{{\mathsf{Fl}}}
\nc{\Spin}{{\mathsf{Spin}}}
\nc{\GL}{{\mathsf{GL}}}
\nc{\SL}{{\mathsf{SL}}}
\nc{\fg}{{\mathfrak{g}}}
\nc{\fsl}{{\mathfrak{sl}}}
\nc{\fso}{{\mathfrak{so}}}
\nc{\PGL}{{\mathsf{PGL}}}
\nc{\Cliff}{{\mathscr{C}\!{\ell}}}
\nc{\ch}{{\mathsf{ch}}}
\nc{\td}{{\mathsf{td}}}
\nc{\id}{{\mathsf{id}}}

\nc{\TGr}{\widehat\Gr}
\nc{\TX}{\widehat{X}}
\nc{\HPB}{{\widehat{\PP(B)}}}

\theoremstyle{plain}

\newtheorem{theorem}{Theorem}[section]

\newtheorem{question}[theorem]{Question}
\newtheorem{lemma}[theorem]{Lemma}
\newtheorem{proposition}[theorem]{Proposition}
\newtheorem{corollary}[theorem]{Corollary}

\theoremstyle{definition}

\theoremstyle{remark}

\newtheorem{remark}[theorem]{Remark}

\title{On K\"uchle manifolds with Picard number greater than 1}
\author{Alexander Kuznetsov}
\address{\sloppy
\parbox{0.9\textwidth}{
Algebra Section, Steklov Mathematical Institute, 8 Gubkin str., Moscow 119991 Russia
\hfill\\[5pt]
The Poncelet Laboratory, Independent University of Moscow\hfill\\[5pt]
Laboratory of Algebraic Geometry, SU-HSE\hfill
}\bigskip}
 \email{{\tt  akuznet@mi.ras.ru}}
\date{}
\thanks{This work is supported by the RSF under a grant 14-50-00005}

\begin{document}

\begin{abstract}
We describe the geometry of K\"uchle varieties (i.e.\ Fano 4-folds of index 1 contained in the Grassmannians as zero loci of equivariant vector bundles) 
with Picard number greater than 1 and the structure of their derived categories.
\end{abstract}

\maketitle

\section{Introduction}

In 1995 Oliver K\"uchle classified in~\cite{kuchle1995fano} all Fano manifolds of degree 4 and index 1 which can be represented
as zero loci of equivariant vector bundles on Grassmannians. His list consisting of 21 examples
can be considered as a first step towards the classification of Fano 4-folds and is still the main 
source of examples of those. However, the actual geometry of K\"uchle varieties was not investigated
so far. This paper is a first step in this direction.

Here we consider only those K\"uchle varieties which have Picard number greater than 1.
There are four such examples in the list:
\begin{description}
\item[(b4)] the zero locus of a global section of the vector bundle $S^2\CU^\vee \oplus \CO(2)$ on $\Gr(2,6)$;
\item[(b9)] the zero locus of a global section of the vector bundle $S^2\CU^\vee \oplus S^2\CU^\vee$ on $\Gr(2,7)$;
\item[(c7)] the zero locus of a global section of the vector bundle $\Lambda^2\CU^\perp(1) \oplus \CO(1)$ on $\Gr(3,8)$; and
\item[(d3)] the zero locus of a global section of the vector bundle $\Lambda^2\CU^\vee \oplus \Lambda^2\CU^\vee \oplus \CO(1)$ on $\Gr(5,10)$.
\end{description}
Here $\CU$ and $\CU^\perp$ denote the tautological vector subbundles of ranks $k$ and $n-k$ on the Grassmannian $\Gr(k,n)$ respectively,
while $\CO(1)$ stands for the ample generator of its Picard group. The main result of the paper is an alternative description
of these varieties, which gives a much better understanding of their geometry. In particular, we apply this to describe the structure
of the derived categories of coherent sheaves of these varieties.

In fact, for variety of type (b4) an alternative description is evident, and in case (b9) it is provided by a recent result of 
Casagrande~\cite{casagrande2014rank}. Furthermore, the case (d3) is quite simple and I would guess that it should be known to experts, 
although I couldn't find a reference. Finally, the case (c7) is not that simple but still is quite manageable. 
The description of this variety is the main result of the paper.

Of course, a simplified description which we get is possible for the K\"uchle varieties under consideration because their Picard group
has big rank and so they have additional structures which can be used. It is hard to expect something similar
for other K\"uchle varieties. Their geometry should be much more complicated, but at the same time much more interesting.

I would like to point out two questions about the geometry of other K\"uchle varieties, which seem to me to be of special interest. 
First, in the K\"uchle's list there are two varieties which have the same collections of discrete invariants:
\begin{description}
\item[(b3)] the zero locus of a global section of the vector bundle $\Lambda^3\CU^\perp(2)$ on $\Gr(2,6)$, and
\item[(b7)] the zero locus of a global section of the vector bundle $\CO(1)^{\oplus 6}$ on $\Gr(2,7)$.
\end{description}

\begin{question}\label{question-b3-b7}
Are K\"uchle varieties of types {\rm{(b3)}} and {\rm{(b7)}} deformation equivalent?
\end{question}

\begin{remark}
Soon after a preliminary version of this paper appeared, Laurent Manivel gave in~\cite{manivel15} an affirmative answer to this question.
\end{remark}


The second question concerns yet another variety:
\begin{description}
\item[(c5)] the zero locus of $\Lambda^2\CU^\vee \oplus \CU^\perp(1) \oplus \CO(1)$ on $\Gr(3,7)$.
\end{description}
The computations in K\"uchle's paper show that its Hodge diamond has the Hodge diamond of a K3 surface sitting inside,
so one can expect this variety to be analogous to a cubic 4-fold. In particular, it is natural to expect that its derived category
has a noncommutative K3 surface as one of the components (cf.~\cite{kuznetsov2010derived}) and that the Hilbert schemes 
of rational curves on it give rise to some hyper-K\"ahler varieties, like the Fano scheme of lines (\cite{beauville1985variete}), 
or the Hilbert scheme of twisted cubics (\cite{lehn2013twisted}) on a cubic 4-fold. Also the rationality questions for these
varieties might be interesting.

\begin{question}\label{question-c5}
Is there a semirthogonal decomposition of the derived category of a K\"uchle variety of type {\rm(c5)}
with a noncommutative {\rm K3} surface as one of the components? Does it give a hyper-K\"ahler structure
on some Hilbert schemes of curves. What can be said about rationality of varieties of that type?
\end{question}

In fact, varieties of type (d3) and (c7) discussed in this paper also have the Hodge diamond of a K3 surface inside,
and we could also ask Question~\ref{question-c5} for those. However, the simplified description of these
varieties obtained in sections~\ref{section-d3} and~\ref{section-c7} answers (is a sense) this question. 
For varieties of type (d3) the answer is trivial
(all varieties are rational, their derived category contains a commutative K3 surface as a component,
and the Hilbert schemes should reduce to moduli spaces of sheaves on the associated K3 surface).
For varieties (c7) the question 
reduces to the same question for special cubic 4-folds.


\medskip

{\bf Acknowledgements:} I am very grateful to Atanas Iliev who informed me about the K\"uchle's paper
and attracted my attention to the question. I would also like to thank Cinzia Casagrande and Sergei Galkin
for discussions of the geometry of the variety of type (b9).

\section{Varieties of types (b4) and (b9)}\label{section-b4-b9}

\subsection{Type (b4)}

Recall that a K\"uchle variety of type (b4) is the zero locus of a global section of the vector bundle $S^2\CU^\vee \oplus \CO(2)$ on $\Gr(2,6)$.

\begin{proposition}
A smooth K\"uchle variety $X_{b4}$ of type {\rm(b4)} is an intersection of divisors of bidegree $(1,1)$ and $(2,2)$ in $\PP^3\times\PP^3$.
\end{proposition}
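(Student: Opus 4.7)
The plan is to unpack the two summands of $S^2\CU^\vee \oplus \CO(2)$ separately on $\Gr(2, V_6)$ and translate the zero locus into divisor conditions on a product of projective spaces.

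First, by Borel--Weil a section of $S^2\CU^\vee$ corresponds to a quadratic form $q_0 \in S^2 V_6^\vee$, whose zero locus is the orthogonal Grassmannian $\OGr(2, V_6, q_0)$ of $q_0$-isotropic 2-planes; smoothness of $X_{b4}$ forces $q_0$ to be non-degenerate. To match this with an incidence variety I would invoke the exceptional isomorphism $\Spin(6) \cong \SL(4)$ in its geometric form: pick a 4-dimensional vector space $V_4$ and realize $V_6$ as $\Lambda^2 V_4$ equipped with the quadratic form $q(\omega) := \omega \wedge \omega \in \Lambda^4 V_4 \cong \kk$. The morphism
\[
\Fl(1,3; V_4) \longrightarrow \OGr(2, \Lambda^2 V_4, q), \qquad (\ell \subset W) \longmapsto \ell \wedge W,
\]
is an isomorphism: the inverse recovers $W$ as the smallest subspace with $U \subset \Lambda^2 W$ and then extracts $\ell$ via the duality $\Lambda^2 W \cong W^\vee \otimes \det W$. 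Identifying $\Fl(1,3; V_4)$ with the incidence subvariety of $\PP(V_4) \times \PP(V_4^\vee) = \PP^3 \times \PP^3$, the first vanishing locus is cut out by a divisor of bidegree $(1,1)$.

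The key computation is then the restriction of the Pl\"ucker polarization to this flag variety. Under the identification above the tautological subbundle $\CU$ pulls back to $\CL \otimes (\mathcal{W}/\CL)$, where $\CL \subset \mathcal{W} \subset V_4 \otimes \CO$ are the tautological rank-$1$ and rank-$3$ subbundles on $\Fl(1,3; V_4)$. Using $\CL = \CO(-1,0)$ and $V_4/\mathcal{W} = \CO(0,1)$, a short determinant computation gives $\det \CU = \CO(-1,-1)|_{\Fl}$, so $\CO_{\Gr(2, V_6)}(1)|_{\Fl} = \CO(1,1)|_{\Fl}$.

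Consequently the second summand $\CO(2)$ restricts to a section of $\CO(2,2)|_{\Fl}$. The K\"unneth vanishing $H^1(\PP^3 \times \PP^3, \CO(1,1)) = 0$, applied to the ideal sequence of $\Fl$, shows that every such section extends to a global $(2,2)$-divisor $Q$ on $\PP^3 \times \PP^3$; then $X_{b4} = \Fl \cap Q$. The main obstacle is the geometric incarnation of $\Spin(6) \cong \SL(4)$ --- pinning down the isomorphism $\OGr(2, V_6) \cong \Fl(1,3; V_4)$ and the restriction of the Pl\"ucker line bundle; once these are in place the rest of the argument is just bookkeeping.
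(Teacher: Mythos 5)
Your argument is correct and takes essentially the same route as the paper: identify the zero locus of the $S^2\CU^\vee$-section with $\Fl(1,3;V_4)\subset\PP^3\times\PP^3$ (the paper phrases this as the Fano scheme of lines on the nondegenerate quadric $\Gr(2,V_4)\subset\PP^5$ rather than invoking $\Spin(6)\cong\SL(4)$ by name, but it is the same identification), and check that the Pl\"ucker line bundle restricts to $\CO(1,1)$. Your additional details --- the explicit inverse of $(\ell\subset W)\mapsto\ell\wedge W$, the determinant computation for $\det\CU$, and the K\"unneth vanishing showing the $(2,2)$-section extends from $\Fl$ to $\PP^3\times\PP^3$ --- merely flesh out steps the paper leaves implicit.
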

\begin{proof}
A global section of the vector bundle $S^2\CU^\vee$ on $\Gr(2,6)$ is given by a quadratic form on the underlying
6-dimensional vector space $V = \kk^6$ and its zero locus is nothing but the Fano scheme of lines on the associated quadric in $\PP(V)$.
If the quadric were degenerate the zero locus would have at least 3-dimensional singularity and hence $X_{b4}$ would also be singular.
So we can assume that the quadric is nondegenerate.
But a nondegenerate quadric in $\PP(V) = \PP^5$ can be identified with the Grassmannian $\Gr(2,W) \subset \PP(\Lambda^2W)$
for a 4-dimensional vector space $W = \kk^4$, and the Fano scheme of lines on $\Gr(2,W)$ then identifies with the flag variety $\Fl(1,3;W)$.
Further, the flag variety $\Fl(1,3;W)$ is the divisor of bidegree $(1,1)$ in $\PP(W)\times\PP(W^\vee)$ (corresponding to the natural
pairing between $W$ and $W^\vee$), and under this identification the restriction of the generator of $\Pic(\Gr(2,V))$ corresponds
to the class of the divisors of bidegree $(1,1)$ in $\Fl(1,3;W) \subset \PP(W)\times\PP(W^\vee)$. Consequently, taking an additional 
zero locus of a section of the line bundle $\CO(2)$ on $\Gr(2,V)$ is equivalent to taking the zero locus of a section 
of the line bundle $\CO(2,2)$ on $\Fl(1,3;W)$.
\end{proof}

Denote by $h$ the pullback via the projection $X_{b4} \to \PP(W)$ of the positive generator 
of $\Pic(\PP(W))$.

\begin{corollary}
A K\"uchle variety $X_{b4}$ of type {\rm(b4)} has a structure of a conic bundle over $\PP^3$. For generic
$X_{b4}$ the discriminant of the conic bundle is an octic surface in $\PP^3$ with $80$ ordinary double points.
\end{corollary}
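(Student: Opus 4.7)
The plan is to read off the conic bundle structure directly from the proposition, identify the discriminant as a bordered determinant, and count its singularities via an Euler characteristic calculation. By the proposition $X_{b4}$ sits inside $\PP(W)\times\PP(W^\vee)\simeq\PP^3\times\PP^3$ as the intersection of $\Fl(1,3;W)$ (a divisor of bidegree $(1,1)$) with a divisor $Z$ of bidegree $(2,2)$, so projecting to the first factor the fibre over $w\in\PP(W)$ is cut out in $\PP(W^\vee)$ by the linear equation defining the hyperplane $H_w=\PP(w^\perp)$ together with a quadric $Q_w$ whose $4\times 4$ symmetric matrix $M(w)$ has entries quadratic in the coordinates of $w$; thus the fibre is the conic $Q_w\cap H_w$ in $H_w\simeq\PP^2$, and this realises $X_{b4}$ as a conic bundle over $\PP(W)=\PP^3$.

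For the equation of the discriminant I would invoke the classical formula expressing the degeneracy of the restriction of a quadric to a hyperplane by the bordered determinant
$$\Delta(w) \;=\; \det\begin{pmatrix}M(w) & w \\ w^T & 0\end{pmatrix}.$$
Every nonzero monomial in the expansion of this $5\times 5$ determinant uses one entry from the last row and one from the last column (each linear in $w$) together with three entries from the $M(w)$-block (each quadratic), so $\deg\Delta = 2 + 3\cdot 2 = 8$ and the discriminant $D\subset\PP^3$ is an octic surface.

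The singular points of $D$ correspond to fibres which degenerate to a double line, i.e.\ to the rank-$1$ locus of the fibrewise conic; intrinsically this is the locus where the section of $S^2(T_{\PP^3}(-1))\otimes\CO(2)$ cutting out the family of conics drops rank to at most $1$, a codimension-$3$ condition that on the $3$-dimensional base $\PP^3$ yields a finite reduced set of points at which $D$ acquires ordinary double points (for generic $X_{b4}$). Their number $m$ I would extract from the standard Euler characteristic identity for conic bundles,
$$\chi(X_{b4}) \;=\; 2\chi(\PP^3) + \chi(D) - m,$$
obtained by stratifying $\PP^3$ into the smooth, reducible, and double-line fibre loci (with fibrewise Euler characteristics $2$, $3$, $2$). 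A Chern class computation on $\PP^3\times\PP^3$ gives $\chi(X_{b4})=152$, a smooth octic in $\PP^3$ has $\chi=304$ so $\chi(D)=304-m$, and the identity collapses to $152 = 8 + 304 - 2m$, forcing $m=80$. The delicate point is the genericity claim that $D$ has only $A_1$ singularities; I would verify this by a local normal-form computation at each rank-$1$ point combined with a Bertini argument applied to the defining bidegree-$(2,2)$ divisor.
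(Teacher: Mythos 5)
Your proposal is correct and reaches the same conclusions, but the quantitative steps go by a genuinely different route than the paper. For the conic bundle structure the two arguments are essentially the same (your fibrewise description of $Q_w\cap H_w$ is the paper's identification of $\Fl(1,3;W)\to\PP(W)$ with $\PP(\Omega_{\PP(W)})$ carrying a quadratic form, read in coordinates). For the degree of the discriminant the paper quotes the Harris--Tu class $2c_1(T_{\PP(W)})=8h$, while your bordered determinant gives the same degree $2+3\cdot 2=8$ directly; both are fine. The real divergence is the count of $80$: the paper reads it off as the corank-$2$ degeneracy class $4(c_1c_2-c_3)=4(24-4)h^3=80h^3$ from Harris--Tu, whereas you get it from the topological identity $\chi(X)=2\chi(\PP^3)+\chi(D)-m$ together with $\chi(X)=152$ (which I checked: the Chern class computation for the $(1,1)\cap(2,2)$ complete intersection in $\PP^3\times\PP^3$ does give $152$) and $\chi(D)=304-m$ for an $m$-nodal octic. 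Your equation $152=8+304-2m$ is arithmetically right and yields $m=80$. The trade-off: the Harris--Tu route computes the class of the corank-$2$ locus without any a priori knowledge of the singularity type of $D$, and the statement that these $80$ points are ordinary double points of $D$ is then a separate (and in the paper equally unproved) genericity assertion; your route bakes that assertion into the input $\chi(D)=304-m$, so your count of $80$ is logically dependent on the $A_1$ claim rather than independent of it. Since you explicitly flag that the $A_1$/reducedness statement needs a local normal-form and Bertini argument --- exactly the point the paper also leaves implicit --- I would accept the proposal as a valid alternative proof at the same level of rigor, with the one caveat that the dependence of the $\chi$-count on the genericity hypothesis should be acknowledged as a mild weakening relative to the degeneracy-class computation.
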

\begin{proof}
The projection $\Fl(1,3;W) \to \PP(W)$ is 
the projectivization 
of the bundle $\Omega_{\PP(W)}$ and a divisor of bidegree $(2,2)$ gives a quadratic form on $\Omega_{\PP(W)}$, i.e.\ 
a symmetric morphism $\Omega_{\PP(W)} \to T_{\PP(W)}$. By~\cite{harris1984symmetric} the classes of the discriminant and the corank 2 locus of this map
can be computed as $2c_1(T_{\PP(W)})$ and $4(c_1(T_{\PP(W)})c_2(T_{\PP(W)}) - c_3(T_{\PP(W)}))$ respectively. Substituting $c_1(T_{\PP(W)}) = 4h$, 
$c_2(T_{\PP(W)}) = 6h^2$ and $c_3(T_{\PP(W)}) = 4h^3$ we conclude that the discriminant is an octic surface and its singular locus has class
$4(4h\cdot 6h^2 - 4h^3) = 4(24h^3 - 4h^3) = 80h^3$.
For generic $X_{b4}$ this shows that the corank 2 locus consists of 80 points, and these points are the only singularities of the discriminant.
\end{proof}

Using this description one can decompose the derived category.

\begin{proposition}
Let $X = X_{b4}$. Then there is a semiorthogonal decomposition 
\begin{equation*}
\BD(X_{b4}) = \langle \CO_X, \CO_X(h), \CO_X(2h), \CO_X(3h), \BD(\PP(W),\Cliff_0) \rangle,
\end{equation*}
where 
\begin{equation*}
\Cliff_0 = \CO_{\PP(W)} \oplus \Omega^2_{\PP(W)}
\end{equation*}
is the sheaf of even parts of Clifford algebras associated with the conic bundle $X \to \PP(W)$.
The category $\BD(\PP(W),\Cliff_0)$ is a twisted $3$-Calabi--Yau category.
\end{proposition}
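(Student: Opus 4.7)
The plan is to combine Kuznetsov's theorem on derived categories of quadric fibrations with Beilinson's full exceptional collection on $\PP^3$, applied to the conic bundle $\pi\colon X \to \PP(W)$ established in the preceding corollary. Flatness of $\pi$ is automatic since $X$ is smooth and each fiber is a conic in the $\PP^2$-fiber of $\PP_{\PP(W)}(\Omega_{\PP(W)}) = \Fl(1,3;W)$.

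First, I would invoke the quadric fibration theorem applied to $\pi$. For a flat conic bundle $X \subset \PP_S(E)$ with $E$ of rank $3$, cut out by a section $q \in H^0(S, S^2 E^\vee \otimes L)$, that theorem provides a semiorthogonal decomposition
$$\BD(X) = \langle \BD(S, \Cliff_0), \pi^*\BD(S) \rangle,$$
where $\Cliff_0$ is the sheaf of even parts of the Clifford algebra of $q$ and, as an $\CO_S$-module, takes the form $\Cliff_0 \cong \CO_S \oplus (\Lambda^2 E) \otimes L^{-1}$. In our setting $E = \Omega_{\PP(W)}$ and, as already recorded in the proof of the preceding corollary, the bidegree $(2,2)$ divisor cutting out $X \subset \Fl(1,3;W)$ corresponds to a symmetric map $\Omega_{\PP(W)} \to T_{\PP(W)} = \Omega_{\PP(W)}^\vee$; in other words $L = \CO_{\PP(W)}$. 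Substituting, and using $\Lambda^2\Omega_{\PP(W)} = \Omega^2_{\PP(W)}$, produces the claimed identity $\Cliff_0 = \CO_{\PP(W)} \oplus \Omega^2_{\PP(W)}$.

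Next, I would expand the $\pi^*\BD(\PP(W))$ block via Beilinson's exceptional collection $\BD(\PP(W)) = \langle \CO, \CO(1), \CO(2), \CO(3) \rangle$. Since $h$ is by definition the pullback of the positive generator of $\Pic(\PP(W))$, this yields the four exceptional line bundles $\CO_X, \CO_X(h), \CO_X(2h), \CO_X(3h)$. The placement of $\BD(\PP(W),\Cliff_0)$ relative to these line bundles is a matter of convention for the quadric fibration theorem; the arrangement in the statement is either the one directly produced by the theorem or obtained from it by an obvious cyclic rotation.

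For the twisted $3$-Calabi--Yau property of $\CA := \BD(\PP(W), \Cliff_0)$, I would identify $\CA$ with the Brauer-twisted derived category $\BD(\tilde S, \alpha)$ of the double cover $f\colon\tilde S \to \PP(W)$ branched along the octic discriminant, where $\alpha$ is the Brauer class carried by $\Cliff_0$. A Riemann--Hurwitz computation then gives $\omega_{\tilde S} = f^*\bigl(\CO_{\PP(W)}(-4) \otimes \CO_{\PP(W)}(4)\bigr) = \CO_{\tilde S}$, so $\tilde S$ is a (singular, since the discriminant has $80$ nodes) Calabi--Yau threefold and $\BD(\tilde S, \alpha)$ is Calabi--Yau of dimension $3$ up to the Brauer twist $\alpha$. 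The main obstacle is precisely this last step: identifying $\Cliff_0$ with the corresponding Azumaya algebra on $\tilde S$ and tracking the twist to verify that the Serre functor of $\CA$ is the shift $[3]$ tensored with the autoequivalence given by $\alpha$; the rest reduces to a direct appeal to the existing machinery.
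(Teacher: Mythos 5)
The first half of your argument (the quadric--fibration theorem for the conic bundle $\pi\colon X\to\PP(W)$, the formula $\Cliff_0\cong\CO_S\oplus\Lambda^2E\otimes L^{-1}$ with $E=\Omega_{\PP(W)}$ and $L=\CO$, and the expansion of the $\pi^*\BD(\PP(W))$ block by Beilinson's collection) is exactly what the paper does, and is fine. The gap is in the Calabi--Yau part, and it is not just a missing verification: the route you propose does not exist. For a quadric fibration the discriminant double cover of the base arises from the \emph{center} of $\Cliff_0$, and that center is $2$-dimensional only when the fibers are even-dimensional quadrics (quadric surface bundles, double covers, etc.). For a conic bundle the fibers are $1$-dimensional, the quadratic form has rank $3$, and $\Cliff_0$ is a rank-$4$ sheaf of algebras that is \emph{central} over $\CO_{\PP(W)}$ and Azumaya away from the discriminant; there is no natural double cover $\tilde S\to\PP(W)$ branched in the octic whose twisted derived category computes $\BD(\PP(W),\Cliff_0)$. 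Two further signs that the step would fail even if some identification were found: a Brauer twist does not alter the Serre functor, so $\BD(\tilde S,\alpha)$ for a Calabi--Yau threefold $\tilde S$ would be an \emph{honest} $3$-Calabi--Yau category with $S=[3]$, which is stronger than (and incompatible with) the statement being proved --- the paper's ``twisted'' means $S=\tau\circ[3]$ for a generally nontrivial involutive autoequivalence $\tau$, which is why the subsequent remark proposes studying the $\tau$-equivariant category; and your $\tilde S$ has $80$ nodes, so it is not smooth and the Serre-functor computation would be delicate in any case.

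The paper's actual argument is categorical rather than geometric: $\Fl(1,3;W)=\PP_{\PP(W)}(\Omega_{\PP(W)})$ carries the rectangular Lefschetz decomposition
\begin{equation*}
\BD(\Fl(1,3;W)) = \langle \BD(\PP(W)), \BD(\PP(W))\otimes\CO(1,1), \BD(\PP(W))\otimes\CO(2,2) \rangle
\end{equation*}
with $3$ blocks, and $X$ is cut out by a section of $\CO(2,2)$; Theorem~5.3 of \cite{kuznetsov2014semiorthogonal} then computes the Serre functor of the residual category $\BD(\PP(W),\Cliff_0)$ and gives $S^2\cong[6]$, whence $S\circ[-3]$ is an involution $\tau$ and the category is $\tau$-twisted $3$-Calabi--Yau. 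If you want to salvage your write-up, replace the double-cover paragraph with this fractional Calabi--Yau argument.
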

\begin{proof}
By~\cite[Thm.~4.2]{kuznetsov2008derived} a conic bundle structure of $X$ gives a two-component semiorthogonal decomposition $\BD(X) = \langle \BD(\PP(W)), \BD(\PP(W),\Cliff_0) \rangle$.
Taking into account the standard exceptional collection in the derived category of the projective space $\PP(W)$ we deduce 
the first claim. The explicit formula for the even part of the Clifford algebra in this case is~\cite[(12)]{kuznetsov2008derived}.
Finally, to check the Calabi--Yau property of $\BD(\PP(W),\Cliff_0)$ we use Theorem~5.3 of~\cite{kuznetsov2014semiorthogonal}.
For this we note that $\Fl(1,3;W)$ is a $\PP^2$-bundle over $\PP(W)$ and so it has a rectangular Lefschetz decomposition
\begin{equation*}
\BD(\Fl(1,3;W)) = \langle \BD(\PP(W)), \BD(\PP(W)) \otimes \CO(1,1), \BD(\PP(W))\otimes \CO(2,2) \rangle.
\end{equation*}
Applying the Theorem we deduce that the square of the Serre functor of the category $\BD(\PP(W),\Cliff_0)$ 
is isomorphic to the shift by 6. Hence the composition of the Serre functor with the shift by $-3$ is an involution $\tau$,
so the category can be considered as a $\tau$-twisted 3 Calabi--Yau category.
\end{proof}

\begin{remark}
One can consider the equivariant with respect to the involution $\tau$ category $\BD(\PP(W),\Cliff_0)^\tau$ 
(in the spirit of~\cite{kuznetsov2014derived}). This category is an untwisted 3 Calabi--Yau category, 
it would be interesting to investigate it.
\end{remark}

\subsection{Type (b9)}

In fact, these varieties were recently thoroughly investigated by Cinzia Casagrande in~\cite{casagrande2014rank}. In particular, she proved the following

\begin{theorem}[{\protect{\cite[Sec.~3.2]{casagrande2014rank}}}]
Let $X_{b9}$ be a K\"uchle fourfold of type {\rm(b9)}. Then there are $7$ points $x_1,\dots,x_7 \in \PP^4$ in a general position
such that $X_{b9}$ is obtained from the blowup $\Bl_{x_1,\dots,x_7}\PP^4$ in the $7$ points by $22$ antiflips in the proper preimages
of $21$ lines $L_{ij}$ passing through $x_i$ and $x_j$ and the proper preimage of the rational normal quartic $C$ passing through
all seven points.
\end{theorem}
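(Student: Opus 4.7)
The plan is to exploit the identification $X_{b9} = F(Y)$, where $Y = Q_1 \cap Q_2$ is a smooth 4-dimensional intersection of two quadrics in $\PP^6$. Indeed, a global section of $S^2\CU^\vee$ on $\Gr(2,V)$ with $V = \kk^7$ is a quadratic form on $V$ whose zero locus parametrizes isotropic lines of the form, so a generic pair of such sections corresponds exactly to the variety of lines contained in $Q_1 \cap Q_2$. The pencil $\{\lambda Q_1 + \mu Q_2\}$ has seven degenerate members, each a quadric cone whose vertex is a point $p_i \in \PP^6$; these seven vertices lie in general linear position, and a suitably chosen linear projection from $\PP^6$ to $\PP^4$ maps them bijectively to the seven points $x_i \in \PP^4$ of the statement.

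Next I would construct an explicit birational map $\varphi \colon X_{b9} \dashrightarrow \Bl_{x_1,\ldots,x_7}\PP^4$. A natural recipe is: for a general line $\ell \subset Y$, use the geometry of the pencil --- for instance the residual intersections of $\ell$ with each cone $Q_{p_i}$, or the span of $\ell$ with a fixed linear subspace through the $p_i$'s --- to associate to $\ell$ a point of $\PP^4$. With good choices $\varphi$ extends to a birational equivalence, and one checks by inspection that its indeterminacy locus meets $\Bl_{x_1,\ldots,x_7}\PP^4$ precisely along the strict transforms of the 21 lines $L_{ij}$ through pairs of points and of the unique rational normal quartic $C$ through all seven points (recalling that $n+3 = 7$ general points in $\PP^n = \PP^4$ lie on a unique rational normal curve of degree $n = 4$).

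Finally, analyze $\varphi$ via Mori theory. Using $K_{\Bl} = -5H + 3\sum E_i$ one computes $K_{\Bl}\cdot \widetilde L_{ij} = K_{\Bl} \cdot \widetilde C = 1 > 0$, so each of the 22 special curves is $K$-positive, and each should span an extremal ray of the Mori cone admitting a small contraction to an isolated singularity; the corresponding antiflip is the opposite small birational modification, which flips the given curve to a $K$-negative direction. One then verifies that performing all 22 antiflips in an admissible order (respecting the incidence relations among the $L_{ij}$ and $C$) yields a smooth projective 4-fold, and identifies this variety with $X_{b9}$ --- for example by producing an explicit morphism to $\Gr(2,V)$ realizing it as the zero locus of a section of $S^2\CU^\vee \oplus S^2\CU^\vee$, via a carefully chosen subsystem of divisors on the antiflipped variety. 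The hard part is controlling all 22 antiflips simultaneously: each modification alters the Mori cone and potentially the structure of subsequent extremal contractions, so a delicate inductive bookkeeping --- or, alternatively, a global analysis via a common resolution of $X_{b9}$ and $\Bl_{x_1,\ldots,x_7}\PP^4$ --- is required to see that the process terminates correctly at the Picard-rank-8 Fano 4-fold $X_{b9}$.
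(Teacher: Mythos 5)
First, a point of order: the paper does not prove this statement at all --- it is quoted verbatim from Casagrande's paper \cite[Sec.~3.2]{casagrande2014rank} and used as a black box, so there is no internal argument to compare yours against. Judged on its own terms, your proposal gets the setup and all the numerology right: $X_{b9}$ is indeed the variety of lines $F(Y)$ on a smooth intersection $Y$ of two quadrics in $\PP^6$, the pencil has $7$ simple degenerate members, $K_{\Bl}=-5H+3\sum E_i$ gives $K\cdot\widetilde L_{ij}=-5+3+3=1$ and $K\cdot\widetilde C=-20+21=1$, and each of the $22$ curves is a $(-1,-1,-1)$-curve (the normal bundles $\CO(1)^{\oplus 3}$ and $\CO(6)^{\oplus 3}$ drop to $\CO(-1)^{\oplus 3}$ after blowing up $2$, resp.\ $7$, points on the curve), so they do support small $K$-positive contractions and the antiflip count $\binom{7}{2}+1=22$ is consistent.

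The genuine gap is that the entire middle of the argument is a placeholder. The birational map $F(Y)\dashrightarrow\Bl_{x_1,\dots,x_7}\PP^4$ is never constructed: ``use the residual intersections with the cones, or the span with a fixed linear subspace, with good choices'' is not a definition, and without it you cannot identify the indeterminacy locus with the $21$ lines and the quartic, nor verify that the composite of the $22$ antiflips lands on $X_{b9}$ rather than on some other small modification. Your proposed mechanism for producing the seven points --- a linear projection $\PP^6\dashrightarrow\PP^4$ of the seven cone vertices --- is also unjustified and is not how the identification actually goes: in Casagrande's argument $F(Y)$ is first identified with a moduli space of rank-$2$ quasiparabolic bundles on $\PP^1$ with $7$ marked points (the $\PP^1$ being the pencil of quadrics and the marked points its degenerate members), and the model $\Bl_7\PP^4$ together with the list of flipping curves then comes out of the variation of the parabolic weights (wall-crossing between the birational models of this moduli space, following Bauer--Mukai); the seven points $x_i\in\PP^4$ correspond to the seven marked points on the rational normal quartic $C\cong\PP^1$, matching the $4$-dimensional moduli on both sides. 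You correctly flag that ``controlling all $22$ antiflips simultaneously'' is the hard part, but that acknowledgement is exactly where a proof would have to begin, so as it stands the proposal is an outline of what must be proved rather than a proof.
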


This result has an immediate consequence for the structure of the derived category.

\begin{corollary}
The derived category $\BD(X_{b9})$ has a full exceptional collection of length $48$.
\end{corollary}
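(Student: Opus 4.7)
The plan is to track the length of a full exceptional collection along Casagrande's birational description, using Orlov's blowup formula and the Bondal--Orlov semiorthogonal decomposition for standard flips.

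Beilinson's collection on $\PP^4$ has length $5$. Orlov's blowup formula contributes $\codim - 1 = 3$ extra exceptional objects per point blown up in a fourfold, so after the seven blowups one obtains a full exceptional collection of length $5 + 7 \cdot 3 = 26$ on $\Bl_{x_1,\dots,x_7}\PP^4$.

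Next I would verify that each of the $22$ antiflips is standard in the sense of Bondal--Orlov, i.e., a flip between a $\PP^1$ with normal bundle $\CO(-1)^{\oplus 3}$ and a $\PP^2$ with normal bundle $\CO(-1)^{\oplus 2}$ inside a smooth fourfold. For the proper transform $\tilde L_{ij}$, adjunction on $\PP^4$ gives $N_{L_{ij}/\PP^4} \cong \CO(1)^{\oplus 3}$; each blowup at a point of $L_{ij}$ lowers the total degree of the normal bundle by $3$, so $\det N_{\tilde L_{ij}/\Bl}$ has degree $3 - 2 \cdot 3 = -3$. The analogous computation for the rational normal quartic $C$ (whose normal bundle in $\PP^4$ has total degree $18$, with $C$ meeting each exceptional divisor once transversely) gives total degree $-3$ on $\tilde C$. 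For $x_1,\dots,x_7$ in general position these rank-three normal bundles on $\PP^1$ will split as $\CO(-1)^{\oplus 3}$, making each antiflip standard.

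By Bondal--Orlov, each standard flip from the $\PP^1$-side to the $\PP^2$-side produces a fully faithful embedding of derived categories whose semiorthogonal complement is a single exceptional object ($b - a = 2 - 1 = 1$). For general positions of the $x_i$ the $22$ flipping loci are pairwise disjoint, so the antiflips can be carried out independently, contributing in total $22$ additional exceptional objects. Combining with the blowup step, one obtains a full exceptional collection of length $26 + 22 = 48$ on $\BD(X_{b9})$. The main technical point to verify is the balanced splitting $\CO(-1)^{\oplus 3}$ of the normal bundles of $\tilde L_{ij}$ and $\tilde C$ in the blowup; everything else is a direct application of Orlov's and Bondal--Orlov's standard semiorthogonal decompositions.
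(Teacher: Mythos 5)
Your proof is correct and follows essentially the same route as the paper: Beilinson's collection on $\PP^4$, Orlov's blowup formula for the seven points ($5+21=26$), and one extra exceptional object per antiflip by Bondal--Orlov ($26+22=48$). Your additional degree computations for $N_{\tilde L_{ij}}$ and $N_{\tilde C}$ (and the caveat that degree $-3$ must actually split as $\CO(-1)^{\oplus 3}$, which is part of what Casagrande's description guarantees) are details the paper leaves implicit.
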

\begin{proof}
The projective space $\PP^4$ has an exceptional collection of length 5.
When we blowup 7 points, by Orlov's blowup formula, we should add 3 exceptional
objects for each of the points, which gives $7\cdot 3 = 21$ more exceptional objects.
Finally, for each of 22 anti-flips by~\cite{bondal1995semiorthogonal} we should add one more object.
So, in the end we get an exceptional collection of length $5 + 21 + 22 = 48$.
\end{proof}

\section{Varieties of type (d3)}\label{section-d3}

By definition the K\"uchle 4-fold of type (d3) is the zero locus of a generic section 
of the vector bundle $\Lambda^2\CU^\vee \oplus \Lambda^2\CU^\vee \oplus \CO(1)$ on $\Gr(5,10)$. 
Note that it is a hyperplane section of a fivefold $M$ defined as the zero locus
of a section of $\Lambda^2\CU^\vee \oplus \Lambda^2\CU^\vee$ on $\Gr(5,10)$. We start with
its description. In fact, we prove a more general result below by replacing $\Gr(5,10)$
by $\Gr(n,2n)$ with arbitrary $n$.

\subsection{Lagrangian spaces for a pencil of skew-forms}

Let $V$ be a vector space of dimension $2n$ and let $\lambda:\kk^2 \to \Lambda^2V^\vee$ be a pencil
of skew-forms. Let $M_\lambda$ be the zero locus of the global section of the vector bundle $\Lambda^2\CU^\vee \oplus \Lambda^2\CU^\vee$
on $\Gr(n,V)$ corresponding to this pencil. Clearly, it parameterizes all half-dimensional subspaces
in $V$ which are Lagrangian for all skew-forms in the pencil.

\begin{theorem}\label{ttf}
If $M_\lambda$ is smooth then $M_\lambda \cong (\PP^1)^n$.
\end{theorem}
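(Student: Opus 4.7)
The plan is to translate the problem into linear algebra on the pencil itself. Under the smoothness assumption, I would first argue that some form in the pencil must be nondegenerate; call it $\omega_0$. Define the operator $A := \omega_0^{-1}\omega_1 \in \End(V)$ by $\omega_0(Av, w) = \omega_1(v, w)$. The skew-symmetry of $\omega_0$ and $\omega_1$ immediately yields that $A$ is self-adjoint with respect to $\omega_0$:
\begin{equation*}
\omega_0(Av, w) = \omega_1(v, w) = -\omega_1(w, v) = -\omega_0(Aw, v) = \omega_0(v, Aw).
\end{equation*}

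Next I would characterize the closed points of $M_\lambda$. An $n$-dimensional subspace $L \subset V$ lies in $M_\lambda$ iff it is Lagrangian for both $\omega_0$ and $\omega_1$, and the second condition rewrites as $A(L) \subset L^{\perp_{\omega_0}} = L$. Thus $L \in M_\lambda$ iff $L$ is $\omega_0$-Lagrangian and $A$-invariant. Standard facts about $\omega_0$-self-adjoint operators then apply: the generalized eigenspaces of $A$ are pairwise $\omega_0$-orthogonal and each is $\omega_0$-nondegenerate of even dimension; moreover, a $2$-dimensional generalized eigenspace must be a genuine eigenspace, because a single $2 \times 2$ Jordan block on a symplectic plane would force the plane to be $\omega_0$-isotropic, contradicting nondegeneracy.

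The heart of the proof is to show that smoothness of $M_\lambda$ forces $A$ to have $n$ distinct eigenvalues $\mu_1, \ldots, \mu_n$, each with a $2$-dimensional eigenspace. The expected dimension of $M_\lambda$ is $n$, and I would argue by contradiction: if some generalized eigenspace $V_\mu$ has dimension $2k \ge 4$, then the $A$-invariant $\omega_0$-Lagrangian subspaces contained in $V_\mu$ sweep out a family of dimension at least $k(k+1)/2$ (when $A|_{V_\mu}$ is scalar this is precisely the Lagrangian Grassmannian of $V_\mu$), which strictly exceeds $k$; combining with the $\PP^1$ contribution from each of the other generic eigenspaces gives $\dim M_\lambda > n$, contradicting smoothness at the expected dimension. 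I expect this dimension-count step to be the main obstacle: one must handle mixed cases where some generalized eigenspaces are generic and others are not, and verify the dimension jump in each Jordan-type case compatible with self-adjointness.

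Once $A$ has $n$ distinct eigenvalues, each with a $2$-dimensional eigenspace, one has the $\omega_0$-orthogonal decomposition $V = \bigoplus_{i=1}^n V_{\mu_i}$ into $2$-dimensional symplectic summands. Every $A$-invariant subspace decomposes as $L = \bigoplus_i L_i$ with $L_i \subset V_{\mu_i}$, and the $\omega_0$-Lagrangian condition becomes simply $\dim L_i = 1$ for each $i$. Therefore $M_\lambda \cong \prod_{i=1}^n \PP(V_{\mu_i}) \cong (\PP^1)^n$, as claimed.
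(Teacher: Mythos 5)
Your translation of the problem into the spectral theory of the $\omega_0$-self-adjoint operator $A=\omega_0^{-1}\omega_1$ is sound and leads to the same endgame as the paper (the paper's kernels $K_i$ of the degenerate members of the pencil are exactly your eigenspaces $V_{\mu_i}$, and the final product decomposition is identical), but the step you yourself flag as the main obstacle is a genuine gap, and the dimension count you propose cannot close it. The problem is that a generalized eigenspace $V_\mu$ of dimension $2k\ge 4$ with $A|_{V_\mu}$ \emph{not} scalar need not produce any excess dimension. Concretely, take $V_\mu$ of dimension $4$ with $A|_{V_\mu}=\mu+N$, where $N$ consists of two Jordan blocks of size $2$ paired by $\omega_0$ (say $Ne_2=e_1$, $Nf_2=f_1$, $Ne_1=Nf_1=0$, $\omega_0(e_1,f_2)=\omega_0(e_2,f_1)=1$, all other pairings zero); this is compatible with self-adjointness and nondegeneracy. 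A short computation shows that the $A$-invariant $\omega_0$-Lagrangian planes in $V_\mu$ are $\Ker N$ together with the planes $\langle Nv,\ v+w\rangle$ with $v\notin\Ker N$, $w\in\Ker N$ (isotropy is automatic), a family of dimension exactly $2=k$. Hence $\dim M_\lambda=n$, there is no contradiction with the expected dimension, and $M_\lambda$ is in fact \emph{singular} (at the Lagrangians containing $\Ker N$) rather than too big --- something only a tangent-space computation detects. This is precisely what the paper supplies: the local criterion that the differential of the section attached to a single form $\lambda_0$ fails to be surjective at $U$ exactly when $\dim(U\cap\Ker\lambda_0)\ge 2$, applied to the degenerate members $\omega_1-\mu\omega_0$ of the pencil; your example is the case where $K_i=\Ker N$ is $2$-dimensional but isotropic for the other forms, which the paper excludes by the clause that $K_i$ must not be isotropic for the rest of the pencil.

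Two smaller points. First, your opening claim that some member of the pencil is nondegenerate also needs an argument (if the whole pencil lies in the Pfaffian hypersurface one must again exhibit a singular point). Second, even in the scalar case your count only shows $\dim M_\lambda>n$ and you then appeal to ``smoothness at the expected dimension''; since the hypothesis is just smoothness, you should either adopt the convention that smoothness of a zero locus means the defining section is regular, or back the count up with the same differential computation. In short, the spectral-decomposition framing is a legitimate alternative to the paper's Pfaffian-divisor language, but the decisive input in either approach is the pointwise non-surjectivity criterion for the differential of the section, which your proposal does not supply.
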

\begin{proof}
Consider the line $L_\lambda \subset \PP(\Lambda^2V^\vee)$ corresponding to the pencil $\lambda$ and
the discriminant hypersurface $D \subset \PP(\Lambda^2V^\vee)$ corresponding
to degenerate skew-forms. Since the degeneracy condition for a skew-form is equivalent
to the vanishing of its determinant, which in its turn equals to the square of the Pfaffian,
we see that $D$ is a hypersurface of degree $n$. Consequently, the intersection $L_\lambda \cap D$
consists of $n$ points (counted with appropriate multiplicity) unless the line is contained in $D$.
For each intersection point $\lambda_i \in L_\lambda \cap D$ denote by $K_i \subset V$
the kernel of the skew-form $\lambda_i$. Let us show that the smoothness of $M_\lambda$ is equivalent 
to the following two conditions:
\begin{enumerate}
\item the intersection $L_\lambda \cap D = \{ \lambda_1,\lambda_2,\dots,\lambda_n \}$ consists of $n$ distinct points, and
\item for each $1 \le i \le n$ we have $\dim K_i = 2$ and $V = K_1 \oplus \dots \oplus K_n$ is a direct sum decomposition.
\end{enumerate}
To prove this, note that given a skew-symmetric form $\lambda_0$ the zero locus of the corresponding section of~$\Lambda^2\CU^\vee$ 
on $\Gr(n,V)$ is singular at a point corresponding to an $n$-dimensional subspace $U \subset V$
if and only if $\dim(U \cap \Ker\lambda_0) \ge 2$ --- this can be checked by a simple local computation.

Now assume first that $\dim K_i > 2$ for some $i$. Choose a skew-form $\lambda_0$ in the pencil $\lambda$
distinct from~$\lambda_i$. Let $K_0 \subset K_i$ be a 2-dimensional subspace which is isotropic for $\lambda_0$
(it exists since $\dim K_i > 2$). Let $K_0^\perp \subset V$ be the orthogonal of $K_0$ with respect to $\lambda_0$.
Then $K_0 \subset K_0^\perp$ since $K_0$ is $\lambda_0$-isotropic. Let $V' := K_0^\perp/K_0$, $\dim V' = 2n - 4$.
Note that $K_0$ is in the kernel of the restriction to $K_0^\perp$ of both $\lambda_i$ and~$\lambda_0$, hence
both forms induce skew-forms $\lambda'_i$ and $\lambda'_0$ on $V'$. Let $U' \subset V'$ be a subspace of dimension $n-2$
isotropic for both $\lambda'_i$ and $\lambda'_0$ (it exists by a dimension count). Let $U \subset K_0^\perp \subset V$
be the preimage of $U'$ under the projection $K_0^\perp \to K_0^\perp/K_0 = V'$. 
By construction $U$ is an $n$-dimensional subspace in $V$ isotropic for both forms $\lambda_i$ and $\lambda_0$.
Moreover, $\dim(U \cap K_i) \ge 2$, hence the zero locus of $\lambda_i$ on $\Gr(n,V)$ is singular at $U$, and on the other
hand $U$ is also contained in the zero locus of $\lambda_0$. Hence it gives a singular point in the intersection 
of the zero loci, i.e.\ on $M_\lambda$. This shows that for smooth $M_\lambda$ we have $\dim K_i = 2$ for all $i$.

Note also that the above argument also proves that the kernel $K_i$ of $\lambda_i$ is not isotropic for the other
skew-forms in the pencil (otherwise we could take $K_0 = K_i$ and repeat the argument) and
this implies the transversality of $L_\lambda$ and $D$. Indeed, the tangent space to $D$ at a point $\lambda_i$ 
is the space of all skew-forms vanishing on $K_i$, and as we just noted the line $L_\lambda$ is not contained in it. 
The transversality shows that there are precisely $n$ distinct points of intersection of $L_\lambda$ with $D$. 

So, it remains to show the direct sum decomposition. Take a nondegenerate skew-form $\lambda_0$ in the pencil. Let us 
show that all the kernel spaces $K_i$ are mutually orthogonal with respect to $\lambda_0$, i.e.\ that $\lambda_0(K_i,K_j) = 0$
for all $i \ne j$. For this note that $\lambda_0$ is a linear combination of $\lambda_i$ and $\lambda_j$, and so it is enough
to show that $\lambda_i(K_i,K_j) = 0$ and $\lambda_j(K_i,K_j) = 0$. But both equalities are evident since $K_i$ is the kernel 
of $\lambda_i$ and $K_j$ is the kernel of $\lambda_j$. As also the restriction of $\lambda_0$ to each $K_i$ is nondegenerate,
the direct sum decomposition follows.

\begin{remark}
In fact what we proved is that $M_\lambda$ is smooth if and only if the pencil of skew-forms in appropriate basis can be written 
in a block-diagonal form
\begin{equation*}
\lambda = u \mathop{\mathsf{diag}}\left( \left(\begin{smallmatrix} 0 & 1 \\ -1 & 0 \end{smallmatrix}\right), 
\left(\begin{smallmatrix} 0 & 1 \\ -1 & 0 \end{smallmatrix}\right), \dots,
\left(\begin{smallmatrix} 0 & 1 \\ -1 & 0 \end{smallmatrix}\right) \right) 
+ v \mathop{\mathsf{diag}}\left( \left(\begin{smallmatrix} 0 & -a_1 \\ a_1 & 0 \end{smallmatrix}\right), 
\left(\begin{smallmatrix} 0 & -a_2 \\ a_2 & 0 \end{smallmatrix}\right), \dots,
\left(\begin{smallmatrix} 0 & -a_n \\ a_n & 0 \end{smallmatrix}\right) \right),
\end{equation*}
where $u$ and $v$ are appropriate coordinates on the pencil and $a_1,\dots,a_n \in \kk$ are pairwise distinct. This is an analogue
of a standard form for a pencil of quadrics.
\end{remark}

Now let us show that $M_\lambda$ is isomorphic to the product $\PP(K_1)\times\PP(K_2) \times \dots \times \PP(K_n) \cong (\PP^1)^n$.
For this we define a map $M_\lambda \to \prod \PP(K_i)$ as follows. Let $U \subset V$ be an $n$-dimensional vector subspace 
isotropic for the pencil $\lambda$. The pencil of skew-forms $\lambda$ gives a pencil of maps $\lambda:U \to U^\perp$. 
The Pfaffian of a skew-form $\lambda$ in the pencil equals the determinant of the corresponding 
map $U \to U^\perp$, hence the maps $\lambda_1,\dots,\lambda_n$ are degenerate. This means that $U$ intersects nontrivially
all the kernel spaces $K_i$. But then it follows that
\begin{equation*}
U = (U\cap K_1) \oplus (U \cap K_2) \oplus \dots \oplus (U \cap K_n),
\qquad
\text{and}
\qquad
\text{$\dim(U\cap K_i) = 1$ for each $i$.}
\end{equation*}
Indeed, each summand in the right-hand side is at least 1-dimensional, and the sum is a direct sum, hence the dimension
of the right-hand side is at least $n$. On the other hand, the right-hand side is evidently contained in the left-hand side,
and the dimension of that is also $n$. Hence the sides coincide and the dimension of each summand in the right-hand side is $1$.
This means that 
\begin{equation*}
M \to \PP(K_1)\times\PP(K_2)\times\dots\times\PP(K_n),
\qquad
U \mapsto (U\cap K_1,U \cap K_2,\dots,U\cap K_n)
\end{equation*}
is a well defined map. The inverse map, of course, is given by 
\begin{equation*}
\PP(K_1)\times\PP(K_2)\times\dots\times\PP(K_n) \to M,
\qquad
(u_1,u_2,\dots,u_n) \mapsto \kk u_1 \oplus \kk u_2 \oplus \dots \oplus \kk u_n.
\end{equation*}
To check that it is well defined we should show that $\lambda(u_i,u_j) = 0$ for any skew-form $\lambda$ in the pencil 
and all $u_i \in K_i$, $u_j \in K_j$. But this follows from the fact that $\lambda$ is a linear combination of $\lambda_i$
and $\lambda_j$ and $\lambda_i(u_i,u_j) = \lambda_j(u_i,u_j) = 0$ since $u_i \in K_i = \Ker\lambda_i$ and $u_j \in K_j = \Ker\lambda_j$
(actually, we already used this argument before).

The constructed two maps are clearly mutually inverse, so this proves the Theorem.
\end{proof}

\subsection{A hyperplane section}

In the previous paragraph we proved that a smooth zero locus of a section of the vector bundle $\Lambda^2\CU^\vee \oplus \Lambda^2\CU^\vee$ 
on $\Gr(n,2n)$ is isomorphic to $(\PP^1)^n$. Now we can describe its hyperplane section.

\begin{lemma}
Under the isomorphism of Theorem~$\ref{ttf}$ the line bundle $\CO_{\Gr(n,2n)}(1)_{|M_\lambda}$ identifies with the line bundle $\CO(1,1,\dots,1)$
on $(\PP^1)^n$. In particular, the zero locus of a general section of the vector bundle $\Lambda^2\CU^\vee \oplus \Lambda^2\CU^\vee \oplus \CO(1)$ 
on $\Gr(n,2n)$ is isomorphic to a divisor of multidegree $(1,1,\dots,1)$ in $(\PP^1)^n$. 
\end{lemma}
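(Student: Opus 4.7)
The plan is to trace the isomorphism of Theorem~\ref{ttf} through the Plücker embedding. Recall that $\CO_{\Gr(n,V)}(1)$ is by definition the pullback of $\CO_{\PP(\Lambda^n V)}(1)$ along the Plücker embedding $U \mapsto \Lambda^n U$, so it suffices to determine how this embedding looks once restricted to $M_\lambda$ and rewritten in terms of the coordinates on $\PP(K_1)\times\dots\times\PP(K_n)$.

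First I would use the direct sum decomposition $V = K_1 \oplus \dots \oplus K_n$ (guaranteed by the proof of Theorem~\ref{ttf}) to decompose the top exterior power as
\begin{equation*}
\Lambda^n V \;=\; \bigoplus_{i_1+\dots+i_n=n} \Lambda^{i_1}K_1 \otimes \dots \otimes \Lambda^{i_n}K_n,
\end{equation*}
and single out the distinguished summand $K_1 \otimes K_2 \otimes \dots \otimes K_n$ (corresponding to $i_1=\dots=i_n=1$). Next I would take a point of $M_\lambda$ in the image of the inverse map from Theorem~\ref{ttf}, i.e.\ a subspace of the form $U = \ell_1 \oplus \dots \oplus \ell_n$ with $\ell_i \subset K_i$ a line. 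A direct computation of $\Lambda^n U$ shows $\Lambda^n U = \ell_1 \otimes \ell_2 \otimes \dots \otimes \ell_n$ as a line in the distinguished summand $K_1 \otimes \dots \otimes K_n$, while the projections of $\Lambda^n U$ onto all other summands vanish for dimension reasons (some $\Lambda^{i_j}K_j$ with $i_j \ge 2$ would force $\dim(\ell_j) \ge 2$).

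Therefore the Plücker embedding, restricted to $M_\lambda$, factors as
\begin{equation*}
M_\lambda \;\cong\; \PP(K_1) \times \dots \times \PP(K_n) \;\hookrightarrow\; \PP(K_1 \otimes \dots \otimes K_n) \;\hookrightarrow\; \PP(\Lambda^n V),
\end{equation*}
where the first arrow is the Segre embedding and the second is the inclusion of a linear subspace. Pulling back $\CO(1)$ along the Segre embedding gives $\CO(1,1,\dots,1)$, which yields the first assertion. The second assertion is immediate: a general section of $\CO_{\Gr(n,2n)}(1)$ restricts to a general section of $\CO(1,1,\dots,1)$ on $(\PP^1)^n$ under the identification just established, and its zero locus inside $M_\lambda$ is then a divisor of multidegree $(1,1,\dots,1)$.

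I do not anticipate a real obstacle here; the only point requiring a little care is the verification that $\Lambda^n U$ lives in the distinguished Segre summand, which is a direct consequence of $\dim \ell_i = 1$ in the 2-dimensional space $K_i$.
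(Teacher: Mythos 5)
Your proof is correct and is essentially the paper's argument in different clothing: the paper notes that the tautological bundle restricts to $\bigoplus_i p_i^*\CO(-1)$ and takes $\det$, while you compute the same top exterior power $\Lambda^n U = \ell_1\otimes\cdots\otimes\ell_n$ via the Pl\"ucker/Segre factorization. Both rest on the splitting $U = \bigoplus (U\cap K_i)$ from Theorem~\ref{ttf}, so there is no substantive difference.
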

\begin{proof}
The precise form of the isomorphism of Theorem~\ref{ttf} shows that the restriction of the tautological bundle $\CU$ from $\Gr(n,2n)$
to $(\PP^1)^n$ splits as
\begin{equation*}
\CU_{|(\PP^1)^n} = \bigoplus_{i=1}^n \, p_i^*\CO(-1),
\end{equation*}
where $p_i$ is the projection of $(\PP^1)^n$ to the $i$-th factor. As $\CO_{\Gr(n,2n)}(-1) = \det \CU$, it follows that its 
restriction to $(\PP^1)^n$ is isomorphic to $\otimes_{i=1}^n p_i^*\CO(-1) = \CO(-1,-1,\dots,-1)$, which proves the first statement.
The second statement is evident.
\end{proof}

Consider a divisor of multidegree $(1,1,\dots,1)$ on $\PP(K_1)\times\PP(K_2)\times\dots\PP(K_n)$, where $K_1,K_2,\dots,K_n$ are 2-dimensional
vector spaces. Such a divisor is given by a multilinear form $s \in K_1^\vee \otimes K_2^\vee \otimes \dots \otimes K_n^\vee$,
which can be also thought of as a linear map 
\begin{equation*}
s:K_n \to K_1^\vee \otimes K_2^\vee \otimes \dots \otimes K_{n-1}^\vee.
\end{equation*}
The latter gives a pencil of sections of the line bundle $\CO(1,1,\dots,1)$ on $\PP(K_1)\times\PP(K_2)\times\dots\PP(K_{n-1})$.
Denote by $Z \subset \PP(K_1)\times\PP(K_2)\times\dots\PP(K_{n-1})$ the intersection of the corresponding divisors.

\begin{lemma}
The projection $\PP(K_1)\times\PP(K_2)\times\dots\PP(K_n) \to \PP(K_1)\times\PP(K_2)\times\dots\PP(K_{n-1})$ identifies
the zero locus $X$ of the section $s$ of $\CO(1,1,\dots,1)$ with the blowup of $\PP(K_1)\times\PP(K_2)\times\dots\PP(K_{n-1})$ in $Z$.
In particular, $X$ is rational.
\end{lemma}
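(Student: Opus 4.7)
The plan is to realize $X$ explicitly as the closure of the graph of a rational map $Y \dashrightarrow \PP(K_n)$ and then identify this closure with the blowup $\Bl_Z Y$, where $Y := \PP(K_1)\times\cdots\times\PP(K_{n-1})$.

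First, I would fix a basis $(v_0,v_1)$ of $K_n$ with dual basis $(v_0^\vee,v_1^\vee)$ and decompose the multilinear form as $s = s_0 \otimes v_0^\vee + s_1 \otimes v_1^\vee$, where
\begin{equation*}
s_0,\ s_1 \;\in\; K_1^\vee \otimes \cdots \otimes K_{n-1}^\vee \;=\; H^0\bigl(Y,\CO(1,1,\dots,1)\bigr)
\end{equation*}
are the two sections generating the pencil $s\colon K_n \to H^0(Y,\CO(1,\dots,1))$; in these terms $Z$ is the common zero locus of $s_0$ and $s_1$ in $Y$. Writing $[w_0:w_1]$ for coordinates on $\PP(K_n)$ in the basis $(v_0,v_1)$, the divisor $X \subset Y \times \PP(K_n)$ is cut out by the single incidence relation
\begin{equation*}
s_0(y)\,w_0 + s_1(y)\,w_1 \;=\; 0.
\end{equation*}

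Next I would analyze the projection $\pi\colon X \to Y$ directly from this equation. Over a point $y \notin Z$ the equation has the unique solution $[w_0:w_1] = [s_1(y):-s_0(y)]$, so $\pi$ is an isomorphism there; over any $y \in Z$ the equation is trivial and the fiber is the full $\PP(K_n) \cong \PP^1$. Hence $\pi$ is birational and contracts a $\PP^1$-bundle onto $Z$, which is precisely the behaviour expected of $\Bl_Z Y \to Y$ when $Z \subset Y$ is a codimension-$2$ local complete intersection cut out by two sections of one line bundle.

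To upgrade this picture to an honest isomorphism $X \cong \Bl_Z Y$, I would appeal to the universal property of the blowup. The incidence equation forces the pullbacks of $s_0$ and $s_1$ to $X$ to be locally proportional with ratio determined by $[w_0:w_1]$, so the ideal $\CI_Z\cdot\CO_X$ is locally principal, hence invertible. The universal property of the blowup therefore produces a canonical $Y$-morphism $X \to \Bl_Z Y$; and since both sides are integral and proper over $Y$, birational, and smooth (for generic $s$ the locus $Z$ is smooth of codimension~$2$ by a routine dimension count, whence $\Bl_Z Y$ is smooth, and $X$ is smooth by assumption), this morphism must be an isomorphism. Rationality of $X$ follows immediately because $Y \cong (\PP^1)^{n-1}$ is rational and $\pi$ is birational. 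The only step requiring genuine care is this last identification with the blowup, but once the defining equation of $X$ is written in the form $s_0 w_0 + s_1 w_1 = 0$ it is exactly the classical picture of a pencil on $Y$ being resolved by blowing up its base locus, so I expect no real obstacle.
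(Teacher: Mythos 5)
Your argument is correct and is precisely the argument the paper leaves as ``Evident'': writing $s = s_0\otimes v_0^\vee + s_1\otimes v_1^\vee$ exhibits $X$ as the incidence divisor $\{s_0(y)w_0+s_1(y)w_1=0\}$ in $Y\times\PP(K_n)$, i.e.\ the resolution of the pencil $[-s_1:s_0]\colon Y\dashrightarrow\PP(K_n)$ with base locus $Z$, where $Y=\PP(K_1)\times\dots\times\PP(K_{n-1})$. The one step you state too loosely is the last: ``proper, birational, both sides smooth and integral and proper over $Y$'' does not in general force a morphism to be an isomorphism (a blowdown of a smooth surface satisfies all of these), so the sentence as written invokes a false general principle. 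It is easily repaired in your setup: either bypass the universal property entirely by noting that $\Bl_Z Y$ is by construction the closure in $Y\times\PP(K_n)$ of the graph over $Y\setminus Z$, that this closure is contained in $X$, and that both are irreducible of dimension $n-1$, hence coincide; or keep your morphism $X\to\Bl_Z Y$ and observe that, being proper and birational, it is surjective, hence surjective (so finite) on each $\PP^1$-fiber over $Z$, and then Zariski's main theorem applied to the normal target gives the isomorphism. With that patch the proof is complete, and rationality follows as you say.
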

\begin{proof}
Evident. 
\end{proof}

Considering the case $n = 5$ we obtain a description of K\"uchle 4-folds of type (d3).

\begin{corollary}
A K\"uhle fourfold $X_{d3}$ is isomorphic to the blowup of $(\PP^1)^4$ in a {\rm K3}-surface $Z$
defined as the intersection of two divisors of multidegree $(1,1,1,1)$.
\end{corollary}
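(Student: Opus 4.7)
The plan is to combine the two preceding lemmas applied in the case $n=5$ and then verify that the resulting surface $Z$ is indeed a K3 surface.

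First I would invoke the first lemma of this subsection with $n=5$: since $X_{d3}$ is by definition the zero locus of a generic section of $\Lambda^2\CU^\vee \oplus \Lambda^2\CU^\vee \oplus \CO(1)$ on $\Gr(5,10)$, the lemma identifies $X_{d3}$ with a divisor of multidegree $(1,1,1,1,1)$ on $\PP(K_1)\times\dots\times\PP(K_5) \cong (\PP^1)^5$, where $K_1,\dots,K_5$ are the $2$-dimensional kernels of the five degenerate forms in the pencil.

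Next I would apply the second lemma with $n=5$. It identifies $X_{d3}$, viewed as the zero locus of a section $s \in K_1^\vee\otimes\dots\otimes K_5^\vee$, with the blowup of $\PP(K_1)\times\dots\times\PP(K_4) \cong (\PP^1)^4$ along the base locus $Z$ of the pencil
\begin{equation*}
s: K_5 \longrightarrow K_1^\vee\otimes K_2^\vee\otimes K_3^\vee\otimes K_4^\vee,
\end{equation*}
so that $Z$ is the transverse intersection of two divisors of multidegree $(1,1,1,1)$ in $(\PP^1)^4$. For generic $s$ the pencil is base-point free on a smooth surface, giving the desired scheme-theoretic description.

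It remains to argue that $Z$ is a K3 surface. I would first compute its canonical class by adjunction: since $K_{(\PP^1)^4} = \CO(-2,-2,-2,-2)$, two applications of adjunction give $K_Z = \CO_Z(-2,-2,-2,-2) \otimes \CO_Z(1,1,1,1)^{\otimes 2} = \CO_Z$. To distinguish between K3 and abelian type I would then compute $\chi(\CO_Z)$ via the Koszul resolution
\begin{equation*}
0 \to \CO(-2,-2,-2,-2) \to \CO(-1,-1,-1,-1)^{\oplus 2} \to \CO \to \CO_Z \to 0;
\end{equation*}
using Künneth on $(\PP^1)^4$ one has $H^\bullet(\CO(-1,-1,-1,-1))=0$ and $H^\bullet(\CO(-2,-2,-2,-2)) = \kk[-4]$, so $\chi(\CO_Z) = 1 + 1 = 2$, which together with $K_Z = 0$ forces $Z$ to be a K3 surface. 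No step is really an obstacle here: everything is a routine consequence of the lemmas already established, with the only small computation being the Koszul/adjunction verification of the K3 condition.
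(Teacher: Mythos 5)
Your proof is correct and takes essentially the same route as the paper: combine the two preceding lemmas at $n=5$ and then verify that $Z$ is a K3 surface, with adjunction giving $K_Z=\CO_Z$. The only (immaterial) difference is in the final verification, where the paper invokes the Lefschetz theorem to get $h^1(\CO_Z)=0$, while you compute $\chi(\CO_Z)=2$ from the Koszul resolution; both are standard and your computation is accurate.
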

\begin{proof}
The only thing to check is that $Z$ defined as above is a K3 surface. But this follows immediately
from the adjunction formula and Lefschetz Theorem. 
\end{proof}

Let $h_i$ denote the pullback to $X_{d3}$ of the ample generator of the Picard groups of $\PP(K_i)$.

\begin{corollary}
There is a semiorthogonal decomposition
\begin{equation*}
\BD(X_{d3}) = \left\langle \left\{ \CO_X\left(k_1h_1 + k_2h_2 + k_3h_3 + k_4h_4 \right) \right\}_{0 \le k_i \le 1}, \BD(Z) \right\rangle.
\end{equation*}
consisting of $16$ exceptional line bundles and the derived category of a {\rm K3}-surface $Z$.
\end{corollary}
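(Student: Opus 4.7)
The plan is to combine the preceding lemma, which realizes $X_{d3}$ as an explicit blowup, with two classical inputs: Orlov's blowup formula and the K\"unneth--Beilinson exceptional collection on a product of projective lines.

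First I would take the preceding lemma in the case $n=5$ to present $X_{d3}$ as the blowup $\pi\colon X_{d3}\to(\PP^1)^4=\PP(K_1)\times\PP(K_2)\times\PP(K_3)\times\PP(K_4)$ centered at the K3 surface $Z$. Since $\dim(\PP^1)^4=4$ and $\dim Z=2$, the center has codimension exactly two, so Orlov's blowup theorem produces a two-term semiorthogonal decomposition
\begin{equation*}
\BD(X_{d3})=\big\langle \pi^{*}\BD((\PP^1)^4),\,\Phi\BD(Z)\big\rangle,
\end{equation*}
where $\Phi$ is the Fourier--Mukai embedding $F\mapsto j_{*}(p^{*}F\otimes\CO_E(-E))$ associated to the exceptional divisor $j\colon E\hookrightarrow X_{d3}$ and its $\PP^1$-bundle projection $p\colon E\to Z$. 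Only a single copy of $\BD(Z)$ enters because the number of copies equals $\codim Z-1=1$, so no additional twist parameter has to be handled.

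Next I would refine the left component into line bundles. On each $\PP^1$ factor the Beilinson collection is $\{\CO,\CO(1)\}$, and by K\"unneth these assemble into the full exceptional collection $\{\CO(k_1,k_2,k_3,k_4)\}_{0\le k_i\le 1}$ of length $16$ on $(\PP^1)^4$. Since $\pi$ is the blowup of a smooth subvariety we have $R\pi_{*}\CO_{X_{d3}}=\CO_{(\PP^1)^4}$, so $\pi^{*}$ is fully faithful; therefore pulling back delivers the exceptional collection of $16$ line bundles $\CO_{X_{d3}}(k_1h_1+k_2h_2+k_3h_3+k_4h_4)$ that generates $\pi^{*}\BD((\PP^1)^4)$. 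Substituting this refinement into the displayed decomposition yields the claim.

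I do not anticipate any genuine obstacle: both Orlov's formula and the K\"unneth--Beilinson collection are completely standard black boxes, and the codimension bookkeeping that controls how many copies of $\BD(Z)$ appear is immediate. The only prerequisite verifications---that $Z$ is smooth and of codimension two inside $(\PP^1)^4$---have already been carried out in the preceding corollary (smoothness by adjunction and Lefschetz) and in the dimension count above.
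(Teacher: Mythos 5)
Your proposal is correct and follows exactly the paper's (one-line) argument: Orlov's blowup formula for the codimension-two center $Z\subset(\PP^1)^4$, combined with the standard exceptional collection of $16$ line bundles on $(\PP^1)^4$ pulled back along the blowup. The extra bookkeeping you supply (one copy of $\BD(Z)$ since $\codim Z-1=1$, full faithfulness of $\pi^*$) is exactly what the paper leaves implicit.
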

\begin{proof}
This is an immediate consequence of the Orlov's blowup formula applied to the standard exceptional collection
on the product of projective lines.
\end{proof}

\begin{remark}
One could also predict a K3 category in $\BD(X_{d3})$ with the help of \cite[Theorem~5.3]{kuznetsov2014semiorthogonal}.
Indeed, $\BD(M)$ clearly has a rectangular Lefschetz decomposition of length $2$, so applying the Theorem,
one immediately gets such a conclusion.
\end{remark}

\section{Varieties of type (c7)}\label{section-c7}

By definition the K\"uchle 4-fold of type (c7) is the zero locus of a generic section 
of the vector bundle $\Lambda^2\CU^\perp(1) \oplus \CO(1)$ on $\Gr(3,8)$. 
Again, as in the case with type (d3) we first consider a 5-fold $M$ defined as the zero locus
of a section of $\Lambda^2\CU^\perp(1)$. For convenience, we replace the Grassmanian $\Gr(3,8)$
with $\Gr(5,8)$. As the bundle $\CU^\perp$ on the first is isomorphic to the bundle $\CU$ on the second,
the 5-fold $M$ is the zero locus of a generic section of the bundle $\Lambda^2\CU(1) \cong \Lambda^3\CU^\vee$ on $\Gr(5,8)$.

Let $\lambda \in \Lambda^3V^\vee$ be a generic 3-form on a vector space $V$ of dimension $8$.
Let $M \subset \Gr(5,V)$ be the zero locus of $\lambda$ considered as a section of the vector bundle~$\Lambda^3\CU^\vee$.
In other words, $M$ is the locus of all 5-subspaces $U \subset V$ such that the restriction of $\lambda$ to~$U$ is zero
(one could call such $U$ isotropic). We will show below that $M$ is isomorphic to the blowup of $\PP^5$
in the Veronese surface.

\subsection{The blowup of $\PP^5$ in the Veronese surface}

Let $W$ be a vector space of dimension $3$. Consider the Veronese embedding $\PP(W) \to \PP(S^2W)$
and let $Y$ be the blowup of $\PP(S^2W)$ with the center in $\PP(W)$.

\begin{lemma}\label{lemma-blowup-veronese}
There is an embedding $Y \subset \PP(S^2W) \times \PP(S^2W^\vee)$ such that
\begin{equation*}
Y = \{ (C,C')\in \PP(S^2W)\times \PP(S^2W^\vee)\ |\ C\cdot C' = t \cdot 1_W \text{ for some $t \in \kk$}\},
\end{equation*}
where $C\cdot C' \in \PP(W\otimes W^\vee)$ is the image of $C\otimes C'$ under the natural map $S^2W\otimes S^2W^\vee \to W\otimes W^\vee$.
The projections $\pi:Y \to \PP(S^2W)$ and $\pi':Y \to \PP(S^2W^\vee)$ are the blowups of the Veronese surfaces
$\PP(W) \subset \PP(S^2W)$ and $\PP(W^\vee) \subset \PP(S^2W^\vee)$. The Picard group of $Y$ is generated by
the pullbacks $H$ and $H'$ of the positive generators of $\Pic(\PP(S^2W))$ and $\Pic(\PP(S^2W^\vee))$ respectively.
The exceptional divisors $E$ and $E'$ of the blowups $\pi$ and $\pi'$ are expressed as
\begin{equation*}
E = 2H - H',
\qquad
E' = 2H' - H
\end{equation*}
and the canonical class is 
\begin{equation*}
K_Y = 2E - 6H = 2E' - 6H' = -2(H + H').
\end{equation*}
\end{lemma}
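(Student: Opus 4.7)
The plan is to identify $Y$ with the graph of the classical adjugate Cremona involution between $\PP(S^2W)$ and $\PP(S^2W^\vee)$, and then derive the Picard group and canonical class from standard blowup formulae. First I would choose a basis for $W$ so that elements of $S^2W$ and $S^2W^\vee$ become symmetric $3\times 3$ matrices, and use the classical identity $C\cdot\mathrm{adj}(C)=\det(C)\cdot 1_W$ to see that $(C,\mathrm{adj}(C))$ satisfies the defining relation of $Y$ with $t=\det(C)$. The assignment $C\mapsto\mathrm{adj}(C)$, given by the $2\times 2$ minors of $C$, is a quadratic rational map
\begin{equation*}
\phi\colon \PP(S^2W)\dashrightarrow\PP(S^2W^\vee)
\end{equation*}
whose base locus is the rank-one locus, i.e.\ the Veronese $\PP(W)\subset\PP(S^2W)$, and the symmetry of the defining equation under $(C,C')\leftrightarrow(C',C)$ makes $\phi$ a birational involution.

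Next I would verify that $Y$ coincides with the closure of the graph of $\phi$, thereby identifying it with $\Bl_{\PP(W)}\PP(S^2W)$. Over the locus where $C$ is invertible, the relation $C\cdot C'=t\cdot 1_W$ forces $C'\propto C^{-1}\propto\mathrm{adj}(C)$, so this open part of $Y$ equals the graph of $\phi$. Over a rank-one point $C=v\otimes v$ on the Veronese the relation becomes $C\cdot C'=0$, i.e.\ $C'(v,-)=0$, so the fiber is the $\PP^2$ of symmetric forms on $W$ vanishing on $\kk v$. A short tangent-space computation identifies this $\PP^2$-bundle over $\PP(W)$ with the projectivized normal bundle of the Veronese, hence with the exceptional divisor of the blowup; the main subtlety, namely ruling out excess components of $Y$ over the rank-$\le 2$ stratum, reduces to this fiber analysis. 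By the symmetric role of $C$ and $C'$, the projection $\pi'$ is likewise the blowup of $\PP(W^\vee)\subset\PP(S^2W^\vee)$.

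Finally, the divisor class statements follow by standard blowup theory. Since $Y$ is the blowup of $\PP^5$ at a smooth $\PP^2$, one has $\Pic(Y)=\ZZ H\oplus\ZZ E$. The map $\pi'$ is defined by the strict transform of the linear system of $2\times 2$ minors of $C$, i.e.\ quadrics on $\PP(S^2W)$ vanishing on the Veronese to order one, so $H'=2H-E$ and hence $E=2H-H'$. Symmetrically $E'=2H'-H$, whence $\Pic(Y)=\ZZ H\oplus\ZZ H'$. The blowup formula
\begin{equation*}
K_Y=\pi^*K_{\PP^5}+(\codim-1)\,E=-6H+2E,
\end{equation*}
combined with $E=2H-H'$, gives $K_Y=-2(H+H')$; the formula $K_Y=2E'-6H'$ follows identically from $\pi'$.
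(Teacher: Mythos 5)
Your proposal is correct and follows the same route as the paper, which simply observes that $Y$ is the graph of the adjugate map $C\mapsto\widehat{C}$ and declares the remaining properties "well known." You have in effect supplied the details the paper omits (the fiber analysis over the degenerate strata, the relation $H'=2H-E$ from the quadrics through the Veronese, and the blowup formula for $K_Y$), all of which check out.
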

\begin{proof}
Consider the rational map $\PP(S^2W) \to \PP(S^2W^\vee)$ defined by $C \mapsto \widehat{C}$, where $\widehat{C}$ is the adjoint matrix of $C$. One can check that $Y$ is its graph.
Alternatively, $Y$ can be identified with the space of {\sf complete quadrics} in $\PP(W)$. All the properties of $Y$
are well known.
\end{proof}

Consider the natural $\GL(W)$-action on $Y$.

\begin{lemma}
The action of $\GL(W)$ on $Y$ has precisely $4$ orbits:
\begin{equation*}
\begin{array}{ll}
Y_0 = \{(C,C')\ |\ r(C) = 3,\ r(C') = 3,\ C' = \widehat{C} \},
\qquad&
y_0 = 
\left( \left( \begin{smallmatrix} 1 & 0 & 0 \\ 0 & 1 & 0 \\ 0 & 0 & 1 \end{smallmatrix} \right), 
\left( \begin{smallmatrix} 1 & 0 & 0 \\ 0 & 1 & 0 \\ 0 & 0 & 1 \end{smallmatrix} \right) \right),\\[2ex]
Y_1 = \{(C,C')\ |\ r(C) = 2,\ r(C') = 1,\ C' = \widehat{C} \},
\qquad&
y_1 = 
\left( \left( \begin{smallmatrix} 1 & 0 & 0 \\ 0 & 1 & 0 \\ 0 & 0 & 0 \end{smallmatrix} \right), 
\left( \begin{smallmatrix} 0 & 0 & 0 \\ 0 & 0 & 0 \\ 0 & 0 & 1 \end{smallmatrix} \right) \right),\\[2ex]
Y_2 = \{(C,C')\ |\ r(C) = 1,\ r(C') = 2,\ C = \hat{C'} \},
\qquad&
y_2 = 
\left( \left( \begin{smallmatrix} 1 & 0 & 0 \\ 0 & 0 & 0 \\ 0 & 0 & 0 \end{smallmatrix} \right), 
\left( \begin{smallmatrix} 0 & 0 & 0 \\ 0 & 1 & 0 \\ 0 & 0 & 1 \end{smallmatrix} \right) \right),\\[2ex]
Y_3 = \{(C,C')\ |\ r(C) = 1,\ r(C') = 1,\ C\cdot C' = 0 \},
\qquad&
y_3 = 
\left( \left( \begin{smallmatrix} 1 & 0 & 0 \\ 0 & 0 & 0 \\ 0 & 0 & 0 \end{smallmatrix} \right), 
\left( \begin{smallmatrix} 0 & 0 & 0 \\ 0 & 0 & 0 \\ 0 & 0 & 1 \end{smallmatrix} \right) \right),
\end{array}
\end{equation*}
the points $y_0,y_1,y_2,y_3 \in Y$ being typical repreentatives of the orbits.
\end{lemma}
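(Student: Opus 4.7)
The plan is to stratify $Y$ by the pair of ranks $(r(C), r(C'))$ of the two projective coordinates, and to show that each nonempty stratum is a single $\GL(W)$-orbit. Since $C$ and $C'$ are points of projective spaces, both are nonzero, so $r(C),r(C') \in \{1,2,3\}$, giving a priori nine cells; the defining relation $C \cdot C' = t \cdot 1_W$ will cut this down to exactly the four listed.

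First I would analyze the relation $CC' = tI$ by elementary matrix arguments, working in a basis that diagonalizes $C$, which is possible since $\kk$ is algebraically closed and the quadratic forms are symmetric. If $r(C) = 3$, then $C$ is invertible, so $C' = t C^{-1}$ is automatically proportional to the adjugate $\widehat{C}$ and has rank $3$; this is the stratum $Y_0$. If $r(C) = 2$, normalize $C = \mathrm{diag}(1,1,0)$; the $(3,3)$-entry of $CC'$ vanishes, forcing $t = 0$, and reading off the remaining entries shows that $C'$ must be proportional to $\mathrm{diag}(0,0,1) = \widehat{C}$, giving rank $1$ and the stratum $Y_1$. The case $r(C)=1$, $r(C')=2$ is symmetric and yields $Y_2$. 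Finally, when $r(C) = 1$, normalizing $C = \mathrm{diag}(1,0,0)$ the condition becomes $CC' = 0$ (since $t$ must again vanish), which forces the first row and column of $C'$ to vanish, so $C'$ lives in the $\mathrm{Sym}^2$ of the complementary $2$-plane $\Ker C$, and its rank is $1$ or $2$; these are the strata $Y_3$ and $Y_2$ respectively.

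To show that each of these four strata is a single $\GL(W)$-orbit, I would appeal to Sylvester's theorem over the algebraically closed field $\kk$: symmetric forms of fixed rank on $W$ (respectively $W^\vee$) form a single $\GL(W)$-orbit in $S^2 W$ (respectively $S^2 W^\vee$). This settles $Y_0$, $Y_1$, and $Y_2$ at once, because in these cases $C'$ is determined (up to scalar) by $C$ via $C' = \widehat{C}$ or dually $C = \widehat{C'}$. For $Y_3$, after fixing $C = \mathrm{diag}(1,0,0)$ we still have the stabilizer of $C$ (modulo scalar) acting on the space of admissible $C'$; its image in $\GL(\Ker C) \cong \GL_2$ acts transitively on rank-one symmetric forms, which completes the classification. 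The representatives $y_0,\dots,y_3$ listed in the statement are manifestly of the normal forms constructed above.

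The argument is largely mechanical; the only step where one has to be attentive is the rank analysis of $C'$ forced by $CC' = tI$, in order to rule out the combinations $(r(C),r(C')) \in \{(2,2),(2,3),(3,2),(3,1),(1,3)\}$. All of these are excluded by the very first step above, as $t$ is either forced to be zero (eliminating any rank-$3$ partner) or makes $C' \sim \widehat{C}$ (fixing $r(C')$ uniquely from $r(C)$).
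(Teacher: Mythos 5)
Your proposal is correct and is precisely the ``straightforward computation'' the paper's one-line proof alludes to: stratify by the rank pair $(r(C),r(C'))$, use the relation $CC'=t\cdot 1_W$ to eliminate the five impossible pairs, and invoke the classification of symmetric forms by rank over an algebraically closed field (together with the equivariance of the adjugate, and for $Y_3$ the transitivity of the stabilizer of $[C]$ on rank-one forms on $W/\Ker C'$) to see each remaining stratum is a single orbit. No gaps worth noting.
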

\begin{proof}
A straightforward computation.
\end{proof}

Consider the maps 
\begin{equation*}
\begin{array}{l}
c:\Lambda^2W^\vee\otimes\CO_Y(-H) \to W\otimes W^\vee\otimes\CO_Y,
\qquad
\xi' \mapsto C \cdot \xi',\\
c':\Lambda^2W\otimes\CO_Y(-H') \to W\otimes W^\vee\otimes\CO_Y,
\qquad
\xi \mapsto \xi \cdot C'.
\end{array}
\end{equation*}

\begin{lemma}\label{lemma-fg}
The sum of the maps $c$ and $c'$
\begin{equation*}
\Lambda^2W^\vee\otimes\CO_Y(-H) \oplus \Lambda^2W\otimes\CO_Y(-H') \xrightarrow{\ c + c'\ } W\otimes W^\vee\otimes\CO_Y,
\end{equation*}
has contant rank $3$. Its image $\fg := \Im(c + c')$ is a vector subbundle of rank $3$ in $\fsl(W)\otimes\CO_X$,
each fiber of which is a Lie subalgebra.
\end{lemma}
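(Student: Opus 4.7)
My approach combines a conceptual calculation on the dense orbit $Y_0$ with a $\GL(W)$-equivariance check on the remaining orbits. First, both $c$ and $c'$ land in $\fsl(W)\otimes\CO_Y$: for symmetric $C$ and antisymmetric $\xi'$ one has $\Tr(C\xi') = \Tr((C\xi')^T) = -\Tr(\xi'C) = -\Tr(C\xi')$, so $\Tr(C\xi')=0$, and the same argument handles $\xi C'$.

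The heart of the argument is the analysis on $Y_0$, where $C$ is invertible and $C' = t\cdot C^{-1}$ for some nonzero scalar $t$. Setting $g := C^{-1}$, viewed as a nondegenerate symmetric form on $W$, I would identify $\Im(c+c')$ with the orthogonal Lie algebra $\fso(W,g) \subset \fsl(W)$. For $\xi' \in \Lambda^2 W^\vee$ the identity $(C\xi')^T = -\xi'C = -C^{-1}(C\xi')C$ is exactly the defining relation for $\fso(W,g)$, and the analogous computation gives $\xi C' \in \fso(W,g)$. Since $c\colon \Lambda^2 W^\vee \to \fso(W,g)$ is injective (as $C$ is invertible) between $3$-dimensional spaces, it is an isomorphism, so $\Im(c+c') = \fso(W,g)$ on $Y_0$. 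This establishes both rank $3$ and the Lie subalgebra property over $Y_0$.

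To propagate to all of $Y$, I would use that $c+c'$ is $\GL(W)$-equivariant and, by the previous lemma, $Y$ has only four $\GL(W)$-orbits; hence it suffices to verify rank $3$ at the four orbit representatives $y_0,y_1,y_2,y_3$. The point $y_0$ is done above, and for $y_1, y_2, y_3$ it is a routine matrix calculation --- at $y_3 = (E_{11}, E_{33})$, for example, $\Im(c)$ is spanned by $E_{12}, E_{13}$ and $\Im(c')$ by $E_{13}, E_{23}$, so $\Im(c+c')$ is the $3$-dimensional subspace spanned by $E_{12}, E_{13}, E_{23}$. Constant rank $3$ then makes $\fg := \Im(c+c')$ a rank-$3$ subbundle of $\fsl(W)\otimes\CO_Y$, and the Lie subalgebra property extends from $Y_0$ to all of $Y$ by closedness: it is the vanishing of the induced morphism $\Lambda^2\fg \to (\fsl(W)/\fg)\otimes\CO_Y$, which already vanishes on the dense open $Y_0$.

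The only genuine obstacle is the rank check at the three boundary orbits $y_1, y_2, y_3$: the conceptual identification with $\fso(W,g)$ degenerates along the exceptional divisors $E$ and $E'$, so one must verify by direct calculation that the rank does not drop there. This step is unavoidable but entirely straightforward.
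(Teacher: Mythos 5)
Your proof is correct and follows essentially the same route as the paper: reduce by $\GL(W)$-equivariance to the four orbit representatives and verify rank $3$ there by direct matrix computation (your $y_3$ computation matches the paper's $\fg_3$). The only cosmetic differences are your conceptual identification of the fiber over $Y_0$ with $\fso(W,C^{-1})$ and your extension of the Lie-subalgebra property from the dense orbit by closedness, where the paper simply observes that each of the four computed subspaces $\fg_0,\dots,\fg_3$ is visibly a traceless Lie subalgebra.
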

\begin{proof}
The map is evidently $\GL(W)$-equivariant, so it suffices to compute it at points $y_0,y_1,y_2,y_3$.
A direct computation shows that the images at these points are the spaces of $3\times 3$ matrices 
of the following form
\begin{equation*}
\fg_0 = \left \{ \left( \begin{smallmatrix} 0 & a & b \\ -a & 0 & c \\ -b & -c & 0 \end{smallmatrix} \right) \right \},
\quad
\fg_1 = \left \{ \left( \begin{smallmatrix} 0 & a & b \\ -a & 0 & c \\ 0 & 0 & 0 \end{smallmatrix} \right) \right \},
\quad
\fg_2 = \left \{ \left( \begin{smallmatrix} 0 & a & b \\ 0 & 0 & c \\ 0 & -c & 0 \end{smallmatrix} \right) \right \},
\quad
\fg_3 = \left \{ \left( \begin{smallmatrix} 0 & a & b \\ 0 & 0 & c \\ 0 & 0 & 0 \end{smallmatrix} \right) \right \},
\end{equation*}
respectively. Each space is 3-dimensional and consists of matrices with zero trace, hence the first claim. 
Moreover, each subspace is a Lie subalgebra in $\fsl(W)$, hence the second claim.
\end{proof}

\begin{remark}\label{fso}
Note that $\fg_0 = \fso(W,C')$ is the special orthogonal (with respect to the form $C'$) Lie algebra.
\end{remark}

\begin{remark}\label{funny}
Note also that each of the subalgebras has the following funny property --- for each vector in $W$
and for each covector in $W^\vee$ there is a nonzero element $\xi$ in the subalgebra annihilating this (co)vector.
\end{remark}

\subsection{Isotropic 5-spaces for a 3-form on an 8-space}

Now we can relate the variety $Y$ discussed in the previous subsection to the scheme $M \subset \Gr(5,V)$
of isotropic 5-spaces in a vector space $V$ of dimension~8 for a 3-form $\lambda \in \Lambda^3V^\vee$.
Indeed, we can identify the space $V$ with the Lie algebra $\fsl(W)$. Since $\fsl(W)$ has a natural
nondegenerate scalar product (the Killing form), we can consider the orthogonal $\fg^\perp$
of the subbundle $\fg$ defined in Lemma~\ref{lemma-fg} with respect to it. Then $\fg^\perp \subset \fsl(W)\otimes\CO_Y = V\otimes\CO_Y$
is a vector subbundle of rank 5, and so it gives a morphism 
\begin{equation*}\label{equation-phi}
\varphi:Y \to \Gr(5,V).
\end{equation*} 
It remains to construct a 3-form on $V$ and to show that $Y$ is identified by this map with $M \subset \Gr(5,V)$.


\begin{lemma}\label{lemma-lambda}
There is a unique $\SL(W)$-invariant $3$-form $\lambda$ on the adjoint representation $\fsl(W)$ of $\SL(W)$. 
It is given by the formula 
\begin{equation*}\label{l38}
\lambda(\xi_1,\xi_2,\xi_3) = \Tr([\xi_1,\xi_2]\xi_3).
\end{equation*} 
\end{lemma}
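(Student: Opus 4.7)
The plan is to treat existence and uniqueness separately. For existence, I would check directly that the displayed formula defines an alternating $\SL(W)$-invariant trilinear form. Trilinearity is clear; for alternation, the cyclic property of the trace gives
\[
\Tr([\xi_1,\xi_2]\xi_3) = \Tr(\xi_1\xi_2\xi_3) - \Tr(\xi_2\xi_1\xi_3),
\]
which combined with cyclic invariance shows the form changes sign under every transposition of arguments. The $\SL(W)$-invariance is immediate from $[g\xi_1 g^{-1},g\xi_2 g^{-1}] = g[\xi_1,\xi_2]g^{-1}$ and the conjugation-invariance of the trace.

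For uniqueness, I would use the Killing form $\langle \xi,\eta \rangle = \Tr(\xi\eta)$ to identify $\fsl(W) \cong \fsl(W)^\vee$ as an $\SL(W)$-representation, which is legitimate because this pairing is $\SL(W)$-invariant and nondegenerate on the simple Lie algebra $\fsl(W)$. Under this identification, an invariant alternating $3$-form on $\fsl(W)$ is the same datum as an $\SL(W)$-equivariant skew-symmetric bilinear operation $\Lambda^2\fsl(W) \to \fsl(W)$; any such form $\omega$ is recovered from the corresponding bracket $[\,,]':\Lambda^2\fsl(W) \to \fsl(W)$ by $\omega(\xi_1,\xi_2,\xi_3) = \langle [\xi_1,\xi_2]',\xi_3 \rangle$. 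By Schur's lemma the dimension of the space of such brackets equals the multiplicity of the adjoint representation $\fsl(W)$ inside $\Lambda^2\fsl(W)$.

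For $\dim W = 3$ the decomposition of $\Lambda^2\fsl_3$ into $\SL_3$-irreducibles contains the $8$-dimensional adjoint representation with multiplicity exactly one, the other two summands being the two non-isomorphic $10$-dimensional representations with highest weights $(3,0)$ and $(0,3)$ (the dimensions $10+10+8 = 28 = \binom{8}{2}$ provide a sanity check). Hence the space of invariant $3$-forms is one-dimensional, and the usual commutator bracket on $\fsl(W)$ produces the distinguished element, which in terms of the Killing form is exactly $\lambda(\xi_1,\xi_2,\xi_3) = \Tr([\xi_1,\xi_2]\xi_3)$.

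The main obstacle is the tensor-decomposition step in the uniqueness argument. In a write-up one can either verify the decomposition of $\Lambda^2\fsl_3$ by an explicit Clebsch--Gordan computation, or bypass it by a direct character-theoretic evaluation of $\dim(\Lambda^3\fsl_3)^{\SL_3}$ via the Weyl integration formula, or alternatively appeal to the general fact that for any simple Lie algebra the Cartan $3$-form is, up to a scalar, the unique invariant $3$-form (corresponding to the primitive class in degree~$3$ of the cohomology of the associated compact group).
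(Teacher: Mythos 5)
Your proof is correct, and the existence half is identical to the paper's (skew-symmetry in the first two slots plus cyclic invariance of the trace). For uniqueness you take a slightly different but equivalent representation-theoretic route: the paper decomposes $\Lambda^3(\fsl(W))$ by Littlewood--Richardson into $\Sigma^{2,0,-2}W \oplus S^3W^\vee \oplus S^3W \oplus \fsl(W) \oplus \kk$ and reads off that the trivial summand has multiplicity one, whereas you decompose $\Lambda^2(\fsl(W)) \cong S^3W \oplus S^3W^\vee \oplus \fsl(W)$ and bound the space of invariant $3$-forms by $\Hom_{\SL(W)}(\Lambda^2\fsl(W),\fsl(W))$ via the trace form and Schur's lemma. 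Both are one-line plethysm checks; yours works with a $28$-dimensional rather than a $56$-dimensional representation, and also explains conceptually why the answer is the Cartan form (the unique copy of the adjoint in $\Lambda^2\fg$ is the commutator). One small imprecision: an invariant $3$-form is not literally ``the same datum'' as an equivariant map $\Lambda^2\fg \to \fg$ --- the natural map $(\Lambda^3\fg^\vee)^{\SL(W)} \to \Hom_{\SL(W)}(\Lambda^2\fg,\fg^\vee)$ is injective but a priori not surjective, since a general equivariant map need not produce a \emph{totally} antisymmetric form. Only the injectivity is needed for the upper bound, and existence of the nonzero Cartan form then pins the dimension at exactly one, so the argument stands; just phrase that step as an inequality.
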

\begin{proof}
Let us show that the right-hand-side is a skew-form. Indeed, $\lambda$ is clearly skew-symmetric in the first two arguments,
so it remains to show that it is invariant under the cyclic permutation. But this follows immediately from
the invariance of the trace under the cyclic permutation.
%
Finally, a direct computation with the Littlewood--Richardson rule gives
\begin{equation*}
\Lambda^3(\fsl(W)) \cong  \Sigma^{2,0,-2}W \oplus S^3W^\vee \oplus S^3W \oplus \fsl(W) \oplus \kk,
\end{equation*}
an isomorphism of $\SL(W)$-modules. We see that the trivial module has multiplicity 1,
hence an invariant 3-form is unique.
\end{proof}

\begin{proposition}\label{laku}
The form~$\lambda$ is the generic $3$-form on a vector space of dimension $8$.
\end{proposition}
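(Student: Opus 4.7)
The plan is to show that $\lambda$ lies in the (unique) open $\GL(V)$-orbit in $\Lambda^3 V^\vee$. Since $\dim \Lambda^3 V^\vee = \binom{8}{3} = 56$ while $\dim \GL(V) = 64$, and since Lemma~\ref{lemma-lambda} exhibits the $8$-dimensional subgroup $\SL(W) \hookrightarrow \GL(V)$ (via the adjoint action on $V = \fsl(W)$) as preserving $\lambda$, the $\GL(V)$-orbit of $\lambda$ has dimension at most $56$. It therefore suffices to show the orbit has dimension \emph{exactly} $56$, i.e.\ that the stabilizer Lie algebra of $\lambda$ inside $\mathfrak{gl}(V)$ coincides with $\fsl(W)$.

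I would prove this by showing the $\SL(W)$-equivariant derivative
\begin{equation*}
\rho\colon \mathfrak{gl}(V) \longrightarrow \Lambda^3V^\vee, \qquad L \longmapsto L \cdot \lambda
\end{equation*}
is surjective. Using the Clebsch--Gordan-type decomposition
\begin{equation*}
\mathfrak{gl}(V) \cong \fsl(W) \otimes \fsl(W) = \kk \oplus 2 \cdot \fsl(W) \oplus \Sigma^{2,0,-2}W \oplus S^3W \oplus S^3W^\vee,
\end{equation*}
together with the decomposition of $\Lambda^3V^\vee$ from the proof of Lemma~\ref{lemma-lambda}, every irreducible summand of the target appears in the source with multiplicity at most one \emph{after} discarding the copy of $\fsl(W)$ which lies in $\ker\rho$ (namely the one coming from the adjoint action, which annihilates $\lambda$ by $\SL(W)$-invariance). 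By Schur's lemma, surjectivity of $\rho$ then reduces to non-vanishing of a single scalar on each of the five isotypic components of the target. The trivial summand is immediate since $\rho(\id_V) = -3\lambda$; the remaining four checks each amount to a single explicit trace computation, performed by evaluating $\Tr([L\xi,\eta]\zeta)$ on suitably chosen highest-weight-type elements of $\fsl(W)$.

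The main obstacle is the bookkeeping for these four non-vanishing checks, but each is routine once one has matched up highest weights between source and target. As a cleaner alternative, one can invoke the Vinberg--\'Elashvili classification of $\GL_8$-orbits on $\Lambda^3\kk^8$: this representation is prehomogeneous, its unique open orbit has an $8$-dimensional reductive stabilizer isomorphic to $\SL_3$ acting by the adjoint representation, and Lemma~\ref{lemma-lambda} realizes $\lambda$ as precisely an element of this open orbit.
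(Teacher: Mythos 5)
Your proposal is correct, and its closing ``cleaner alternative'' is in fact exactly the paper's proof: the paper cites \'Elashvili's Table~1 for the statement that $\SL(V)$ acts on $\PP(\Lambda^3V^\vee)$ with a dense orbit whose stabilizer has Lie algebra $\fsl_3$ acting on $V$ by the adjoint representation, identifies $V\cong\fsl(W)$ accordingly, and then concludes by the uniqueness of the invariant $3$-form from Lemma~\ref{lemma-lambda}. Your primary route is genuinely different: rather than quoting the classification, you propose to verify openness of the orbit directly by showing that the differential $\rho\colon\mathfrak{gl}(V)\to\Lambda^3V^\vee$, $L\mapsto L\cdot\lambda$, is surjective, using Schur's lemma on the isotypic decompositions of source and target (both of which you state correctly; in particular $\mathfrak{gl}(V)\cong\fsl(W)\otimes\fsl(W)\cong\kk\oplus 2\,\fsl(W)\oplus\Sigma^{2,0,-2}W\oplus S^3W\oplus S^3W^\vee$, of dimension $64$, against the $56$-dimensional target). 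This would make the proposition self-contained and independent of \cite{elashvili1972canonical}, which is a real gain. The cost is that the four non-vanishing checks you defer are not decorative: they are the entire content of that route, since if even one of the scalars on the components $\Sigma^{2,0,-2}W$, $S^3W$, $S^3W^\vee$, $\fsl(W)$ vanished, the orbit would fail to be open and the proposition would be false for this particular $\lambda$. So as written the primary route is a sound reduction but not yet a proof; it stands only because your fallback (which coincides with the paper's argument, and which does correctly combine the classification with the uniqueness statement --- the step that rules out, e.g., the zero form) is complete. One small further point: to finish either route one should also observe $\lambda\neq 0$, which is immediate from the explicit formula $\lambda(\xi_1,\xi_2,\xi_3)=\Tr([\xi_1,\xi_2]\xi_3)$.
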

\begin{proof}
Let $V$ be a vector space of dimension 8. Recall that by~\cite[Table 1]{elashvili1972canonical} 
the action of the group $\SL(V)$ on the space $\PP(\Lambda^3V^\vee)$ of 3-forms has a dense orbit, 
the Lie algebra of the stabilizer of the generic 3-form is $\fsl_3 \subset \fsl(V)$, 
and the restriction of the representation $V$ to this $\fsl_3$ is isomorphic 
to the adjoint representation. 
Thus we can identify 
\begin{equation*}
V \cong \fsl(W)
\end{equation*}
so the generic 3-form on $V$ identifies with an $\SL(W)$-invariant 3-form on $\fsl(W)$.
As we have shown that $\lambda$ is the unique such form, the Proposition follows.
\end{proof}

\begin{remark}
In an appropriate basis the form $\lambda$ can be written explicitly as
\begin{equation*}
\lambda = x_{238} + x_{167} - x_{247} - x_{356} - x_{148} - x_{158}.
\end{equation*}
\end{remark}

%

\begin{proposition}
The map $\varphi:Y \to \Gr(5,V)$ is a closed embedding and its image coincides with the zero locus $M$
of the global section $\lambda$ of the vector bundle $\Lambda^3\CU^\vee$.
\end{proposition}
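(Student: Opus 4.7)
The plan decomposes the proof into three parts: (a) $\varphi(Y) \subset M$; (b) $\varphi : Y \to M$ is bijective on closed points; (c) $\varphi$ is an isomorphism, in particular a closed embedding with image $M$.

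For (a), I would use the nondegenerate trace form on $V = \fsl(W)$ to identify $V$ with $V^\vee$ and reformulate $\lambda$-isotropy of $\fg_y^\perp$ as the inclusion
\begin{equation*}
[\fg_y^\perp, \fg_y^\perp] \;\subset\; (\fg_y^\perp)^\perp \;=\; \fg_y,
\end{equation*}
which is equivalent to $\lambda(\xi_1,\xi_2,\xi_3) = \Tr([\xi_1,\xi_2]\xi_3)$ vanishing on $\fg_y^\perp$. Both $\fg$ and $\lambda$ are $\SL(W)$-equivariant, so the inclusion only has to be checked at one point of each of the four orbits $Y_0,\dots,Y_3$ classified above. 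At $y_0$ the verification is transparent, since $\fg_0 = \fso(W)$ is the space of antisymmetric matrices, $\fg_0^\perp$ is the space of symmetric traceless matrices, and the commutator of two symmetric matrices is automatically antisymmetric. At $y_1,y_2,y_3$ the inclusion follows from a direct substitution using the explicit generators of $\fg_i$ recorded in Lemma~\ref{lemma-fg}.

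For (b), by $\GL(W)$-equivariance the map $\varphi$ sends each orbit $Y_i$ into a single $\GL(W)$-orbit in $\Gr(5, V)$, so it suffices to check that these four target orbits are distinct and that on each orbit $\varphi$ is bijective. The first is clear because the four Lie subalgebras $\fg_i \subset \fsl(W)$ are pairwise non-isomorphic: only $\fg_0 \cong \fsl_2$ is semisimple, and $\fg_1,\fg_2,\fg_3$ are solvable of three distinct types. For the second, since $\varphi$ restricted to $Y_i$ is an equivariant map between homogeneous $\GL(W)$-spaces, bijectivity amounts to the equality of the stabilizer of $y_i$ in $\GL(W)$ with the stabilizer of $\fg_i^\perp$, equivalently the normalizer of $\fg_i$. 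At $y_0 = (I,I)$ both groups are $\kk^{\times} \cdot \mathsf{O}(W)$; at $y_1, y_2, y_3$ the equality is verified by a short $3 \times 3$ block computation.

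For (c) one combines the above with dimensions: $Y$ is smooth projective of dimension $5$, while for $\lambda$ generic (Proposition~\ref{laku}) $M$ is smooth of dimension $\dim \Gr(5,V) - \mathop{\mathsf{rank}} \Lambda^3\CU^\vee = 15 - 10 = 5$; and a bijective morphism between smooth projective varieties over a field of characteristic zero is an isomorphism (properness plus finite fibers gives finiteness, and a finite birational morphism onto a normal target is an isomorphism). The main obstacle is the stabilizer comparison in (b) at the three degenerate orbits $Y_1, Y_2, Y_3$, where $\fg_i$ is solvable and has a priori a larger normalizer than the stabilizer of the pair $(C_i, C_i')$; the equality depends on the extra rigidity provided by the decomposition $\fg_i = \Im(c) + \Im(c')$, which is essentially the content of Remark~\ref{funny}.
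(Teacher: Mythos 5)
Your part (a) matches the paper's first step (the paper uses the very same observation at $y_0$: commutators of symmetric traceless matrices are antisymmetric, hence lie in $\fg_0$, so the trace form vanishes), except that the paper checks only the open orbit $Y_0$ and gets the rest for free, since the isotropy condition is closed and $Y$ is irreducible — your verifications at $y_1,y_2,y_3$ are unnecessary. Where you genuinely diverge is the endgame. The paper proves injectivity only over $Y_0$ (a nondegenerate $C'$ is recovered from $\fg_0=\fso(W,C')$ as the unique invariant quadratic form, which is your stabilizer comparison at $y_0$ in disguise), concludes birationality of $\varphi\colon Y\to M$ from the dimension count, and then finishes in one line: a birational morphism between smooth projective varieties with the same Picard number is an isomorphism, quoting $\rho(M)=2$ from K\"uchle's paper. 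This completely sidesteps what you yourself flag as ``the main obstacle'', namely the normalizer computations on the three degenerate orbits.

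Your alternative route (full bijectivity plus Zariski's main theorem) is viable in principle, but as written it has two concrete problems. First, the claim that $\fg_1,\fg_2,\fg_3$ are solvable ``of three distinct types'' is false: a direct computation with the bases given in Lemma~\ref{lemma-fg} shows $\fg_1\cong\fg_2$ as abstract Lie algebras (both are $\kk^2\rtimes\kk$ with the distinguished generator acting on the abelian ideal with eigenvalues $\pm i$). They are distinguished only as subalgebras of $\fsl(W)$ — for instance, every element of $\fg_2$ annihilates a common vector of $W$, while $\fg_1$ has no common kernel vector — so the orbit-separation step must be repaired along these lines. Second, the stabilizer-versus-normalizer equalities at $y_1,y_2,y_3$ are asserted but not carried out, and these are exactly the delicate points: the normalizer of a solvable subalgebra can a priori exceed the stabilizer of the pair $(C,C')$. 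If you want to keep injectivity only over the open orbit, Zariski's main theorem alone does not suffice — you must still exclude positive-dimensional fibers, i.e.\ contracted divisors, which is precisely what the Picard-number comparison buys. Finally, both your argument and the paper's use surjectivity via the dimension count, which tacitly requires $M$ to be irreducible (smooth and connected); this hypothesis should be made explicit.
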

\begin{proof}
First let us show that the image of the map $\varphi$ is contained in $M$, i.e.\ consists of 5-dimensional subspaces
isotropic for $\lambda$. Since $Y$ is irreducible and the condition of being isotropic is closed,
it is enough to check that the image of the open $\GL(W)$-orbit $Y_0 \subset Y$ consists 
of isotropic subspaces. Moreover, since the map $\varphi$ and the form $\lambda$ are $\GL(W)$-invariant,
it is enough to check only that the image of the point $y_0$ is isotropic. This can be done
by a straightforward computation. Indeed, we have $\varphi(y_0) = \fg_0^\perp$ is the space
of all symmetric $3\times 3$ matrices with zero trace. Take any $\xi_1,\xi_2,\xi_3 \in \fg_0^\perp$.
Then it is clear that $[\xi_1,\xi_2]$ is a skew-symmetric matrix, so it is in $\fg_0$, hence
$\Tr([\xi_1,\xi_2]\xi_3) = 0$.

Second, let us check that the map $\varphi$ is an embedding on the open orbit $Y_0$.
Indeed, this is clear since a nondegenerate quadratic form $C'$ can be reconstructed
from the Lie algebra $\fg = \fso(W,C') \subset \fsl(W)$ as the unique $\fg$-invariant
quadratic form in $S^2W^\vee$.

Thus we just checked that the morphism $\varphi$ is a morphism $Y \to M \subset \Gr(5,V)$ and this morphism is injective
on the open subset $Y_0 \subset Y$. Since both $Y$ and $M$ are 5-dimensional,
the morphism is birational. Since both are smooth and have Picard number 2 (for $Y$ this is clear by definition
and for $M$ this is proved in~\cite{kuchle1995fano}), this is an isomorphism.
\end{proof}

Thus we have proved the following

\begin{theorem}\label{kc7}
The zero locus $M$ of a generic section of the vector bundle $\Lambda^3\CU^\vee$ on $\Gr(5,8)$
is isomorphic to the blowup of $\PP^5$ in Veronese surface $\nu_2(\PP^2)$.
\end{theorem}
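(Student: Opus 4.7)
The plan is to assemble the theorem from the three main ingredients already developed in this section, since essentially all the substantive work has been carried out in the preceding results.

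First I would invoke Proposition~\ref{laku}, which identifies a generic $3$-form on an $8$-dimensional vector space $V$ with the $\SL(W)$-invariant trace form $\lambda(\xi_1,\xi_2,\xi_3) = \Tr([\xi_1,\xi_2]\xi_3)$ on $V \cong \fsl(W)$. Up to isomorphism $M$ depends only on the $\SL(V)$-orbit of the defining $3$-form, so there is no loss in working with this explicit model.

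Second, the preceding proposition constructs the morphism $\varphi : Y \to \Gr(5,V)$ sending $(C,C') \mapsto \fg^\perp_{(C,C')}$ via the Killing form on $\fsl(W)$, and shows that $\varphi$ is a closed embedding whose image is exactly the zero locus $M$ of $\lambda$. Hence $\varphi$ yields an isomorphism $Y \xrightarrow{\sim} M$.

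Third, Lemma~\ref{lemma-blowup-veronese} identifies $Y$ (embedded in $\PP(S^2W) \times \PP(S^2W^\vee)$ by the incidence condition $C \cdot C' = t \cdot 1_W$) with the blowup of $\PP(S^2W)$ along the Veronese surface $\PP(W) \subset \PP(S^2W)$. Since $\PP(S^2W) \cong \PP^5$ and the embedding $\PP(W) \hookrightarrow \PP(S^2W)$ given by $[w] \mapsto [w \otimes w]$ is precisely the second Veronese $\nu_2 : \PP^2 \hookrightarrow \PP^5$, the variety $Y$ is exactly $\Bl_{\nu_2(\PP^2)}\PP^5$. Chaining the isomorphisms yields
\begin{equation*}
M \;\cong\; Y \;\cong\; \Bl_{\nu_2(\PP^2)}\PP^5,
\end{equation*}
which is the content of the theorem.

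There is no genuine obstacle remaining: the potentially delicate points — uniqueness of the generic $3$-form up to $\SL(V)$-equivalence, the orbit-by-orbit computation of the subbundle $\fg$, the constancy of its rank, and the promotion of a $\GL(W)$-equivariant birational morphism $Y \to M$ between smooth varieties of Picard number two to an isomorphism — have all been handled above. The only thing left to observe explicitly is that the Veronese embedding appearing as the blow-up center in Lemma~\ref{lemma-blowup-veronese} is precisely $\nu_2$, which is immediate from the definition of $\PP(W) \hookrightarrow \PP(S^2W)$.
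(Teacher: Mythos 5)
Your proposal is correct and follows exactly the paper's own route: the theorem is stated as a summary of the preceding results, obtained by chaining Lemma~\ref{lemma-blowup-veronese} (which realizes $Y$ as the blowup of $\PP(S^2W)\cong\PP^5$ along the Veronese $\PP(W)$), Proposition~\ref{laku} (genericity of the invariant $3$-form on $\fsl(W)$), and the proposition identifying $\varphi:Y\to\Gr(5,V)$ as a closed embedding with image $M$. Nothing is missing.
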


\subsection{A hyperplane section}

Recall that by definition the K\"uchle fourfold of type (c7) is a half-anticanonical section of the variety $M$.
By Lemma~\ref{lemma-blowup-veronese} the corresponding linear system on $M$ is $3H - E$, where $H$ is the pullback
to $M$ of the generator of $\Pic(\PP^5)$, and $E$ is the exceptional divisor of the blowup $M \to \PP^5$.

\begin{corollary}
Let $X$ be a K\"uchle fourfold of type $(c7)$. Then there is a cubic fourfold $Z$ containing a Veronese surface
$S = \nu_2(\PP^2)$ such that $X$ is isomorphic to the blowup of $Z$ in $S$.
\end{corollary}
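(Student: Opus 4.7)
The plan is to identify $X$ with the strict transform under $\pi : M \to \PP^5$ of a cubic fourfold through the Veronese surface. By Theorem~\ref{kc7}, $M$ is the blowup $\pi : \Bl_S \PP^5 \to \PP^5 = \PP(S^2W)$ of the Veronese surface $S = \nu_2(\PP(W))$, and by Lemma~\ref{lemma-blowup-veronese} we have $K_M = -2(H+H') = -2(3H-E)$, since $H' = 2H - E$. Hence the half-anticanonical class on $M$ is $3H - E$, so the K\"uchle variety $X \subset M$ is a general divisor in the linear system $|3H - E|$.

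Next I would identify this linear system with cubics through $S$. Using $\pi_* \CO_M(-E) = \CI_S$ and the projection formula one gets
\begin{equation*}
\pi_* \CO_M(3H - E) = \CI_S(3),
\end{equation*}
so a section $f$ defining $X$ corresponds canonically to a cubic form $\tilde f$ on $\PP^5$ vanishing on $S$; set $Z := V(\tilde f) \subset \PP^5$. The key relation $\pi^*\tilde f = s_E \cdot f$ (with $s_E$ the tautological section of $\CO_M(E)$) gives $\pi^* Z = X + E$ as divisors on $M$, and since $X$ and $E$ have linearly independent classes, $X$ does not contain $E$. Therefore $X$ is precisely the strict transform of $Z$ under $\pi$, and by the universal property of blowing up, $\pi|_X : X \to Z$ is the blowup of $Z$ along $S$.

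To finish, I would check that for generic $f$ (equivalently, generic $Z$ in $|\CI_S(3)|$), the cubic fourfold $Z$ is smooth and $S$ is smoothly embedded in $Z$ in codimension two. Smoothness of $Z$ off $S$ follows from the isomorphism $X \setminus E \xrightarrow{\sim} Z \setminus S$ together with smoothness of $X$ (the K\"uchle hypothesis), while smoothness along $S$ follows from the fact that $X \cap E$ is a $\PP^1$-bundle over $S$, cut out as a relative hyperplane in the $\PP^2$-bundle $E \to S$, which forces $S$ to be a regular embedding into $Z$ of codimension two. The main obstacle is exactly this relative-hyperplane statement: it amounts to verifying that $|\CI_S(3)|$ separates normal directions to $S$ at every point, i.e.\ that the natural map $H^0(\PP^5, \CI_S(3)) \to (\CI_S/\CI_S^2)(3)|_s$ is surjective for all $s \in S$. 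This is classical for the Veronese surface (a Bertini-style check on the ideal generators of $S$) and is the one non-formal input needed to deduce $X \cong \Bl_S Z$ with $Z$ a smooth cubic fourfold.
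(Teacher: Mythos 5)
Your proposal is correct and follows exactly the route the paper intends: the paper's own proof is the single word ``Evident,'' resting on Theorem~\ref{kc7} ($M\cong\Bl_S\PP^5$) and Lemma~\ref{lemma-blowup-veronese} (the half-anticanonical class is $3H-E$, i.e.\ cubics through the Veronese surface), which is precisely the argument you spell out. Your additional verification that $Z$ is smooth along $S$ via surjectivity of $H^0(\PP^5,\CI_S(3))\to(\CI_S/\CI_S^2)(3)|_s$ goes beyond what the corollary literally claims, but is a worthwhile supplement since the next corollary implicitly uses smoothness of $Z$.
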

\begin{proof}
Evident.
\end{proof}

Now we can also describe the derived category. Denote by $E_X$ the exceptional divisor of the blowup map 
$X \to Z$ and by $i:E_X \to X$ its embedding. Note that $E_X$ is a $\PP^1$-bundle over $S$.
Denote by $h$ the positive generator of $\Pic(S)$ and its pullback to $E_X$.

\begin{corollary}
There is a semiorthogonal decomposition
\begin{equation*}
\BD(X_{c7}) = \langle \CO_X, \CO_X(H), \CO_X(2H), i_*\CO_{E_X}, i_*\CO_{E_X}(h), i_*\CO_{E_X}(2h), \CA_X \rangle,
\end{equation*}
where $\CA_X$ is a noncommutative {\rm K3} cateogry, equivalent to the nontrivial part of the derived category
of the cubic fourfold $Z$.
\end{corollary}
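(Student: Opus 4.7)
The plan is to combine Orlov's blowup formula with Kuznetsov's K3 decomposition of the cubic fourfold and the Beilinson exceptional collection on $\PP^2$. By the preceding corollary, $f : X \to Z$ realizes $X$ as the blowup of the cubic fourfold $Z$ along the Veronese surface $S \cong \PP^2$, which has codimension $2$ in $Z$. Orlov's blowup theorem then supplies the two-block semiorthogonal decomposition
\begin{equation*}
\BD(X) = \langle f^*\BD(Z),\ i_* \pi^* \BD(S) \rangle,
\end{equation*}
where $\pi = f|_{E_X} : E_X \to S$ is the $\PP^1$-bundle structure on the exceptional divisor and $i : E_X \hookrightarrow X$ is its embedding.

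The first step is to expand the block $f^*\BD(Z)$ using Kuznetsov's decomposition for a cubic fourfold,
\begin{equation*}
\BD(Z) = \langle \CA_Z,\ \CO_Z,\ \CO_Z(H_Z),\ \CO_Z(2H_Z) \rangle,
\end{equation*}
in which $\CA_Z$ is the noncommutative K3 category. Since $f^*$ is fully faithful on the blowup and sends $\CO_Z(k H_Z)$ to $\CO_X(k H)$, this block refines to $\langle \CA_X,\ \CO_X,\ \CO_X(H),\ \CO_X(2H) \rangle$ with $\CA_X := f^* \CA_Z$, an admissible subcategory equivalent to $\CA_Z$.

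The second step is to expand $i_* \pi^* \BD(S)$. Since $S \cong \PP^2$, the Beilinson collection gives $\BD(S) = \langle \CO_S,\ \CO_S(h),\ \CO_S(2h) \rangle$, and applying $i_* \pi^*$ produces the three exceptional objects $i_*\CO_{E_X},\ i_*\CO_{E_X}(h),\ i_*\CO_{E_X}(2h)$ (using $\pi^*\CO_S(h) = \CO_{E_X}(h)$ by definition of $h$). Concatenating the two expanded blocks yields
\begin{equation*}
\BD(X) = \langle \CA_X,\ \CO_X,\ \CO_X(H),\ \CO_X(2H),\ i_*\CO_{E_X},\ i_*\CO_{E_X}(h),\ i_*\CO_{E_X}(2h) \rangle.
\end{equation*}

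To match the order stated in the corollary, I would finally mutate the admissible subcategory $\CA_X$ to the far right past the six subsequent exceptional objects. Mutation through an exceptional collection is a triangulated equivalence on admissible subcategories, so the resulting subcategory is still a noncommutative K3 equivalent to $\CA_Z$, and we continue to denote it $\CA_X$. No real obstacle appears: all ingredients (Orlov's blowup formula, Kuznetsov's theorem for cubic fourfolds, Beilinson's collection on $\PP^2$) are classical, and the final mutation is automatic once admissibility is granted.
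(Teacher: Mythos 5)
Your proposal is correct and follows essentially the same route as the paper: Orlov's blowup formula, Kuznetsov's decomposition of the cubic fourfold, the Beilinson collection on $S\cong\PP^2$, and a final mutation of the K3 component. The only (immaterial) difference is that the paper uses the form $\BD(Z)=\langle \CO_Z,\CO_Z(H),\CO_Z(2H),\CA_Z\rangle$ and therefore only mutates $\CA_Z$ past the three objects coming from $\BD(S)$, whereas you place $\CA_Z$ on the left and mutate past all six exceptional objects.
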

\begin{proof}
The derived category of $Z$ has a decomposition $\BD(Z) = \langle \CO_Z, \CO_Z(H), \CO_Z(2H), \CA_Z \rangle$
with $\CA_Z$ being a noncommutative K3 category (see~\cite{kuznetsov2010derived}). Since $X$ is the blowup
of $Z$ in $S$ we have by Orlov's blowup formula $\BD(X) = \langle \CO_X, \CO_X(H), \CO_X(2H), \CA_Z, \BD(S) \rangle$.
Mutating $\CA_Z$ to the right and replacing $\BD(S) = \BD(\PP^2)$ by the standard exceptional collection
we get the result.
\end{proof}

\begin{remark}
Again, one could predict appearance of a K3 category by using \cite[Theorem~5.3]{kuznetsov2014semiorthogonal}.
Indeed, the blowup $M$ of $\PP^5$ in the Veronese surface has an exceptional collection consisting of
$6$ line bundles pulled back from $\PP^5$ and $6$ line bundles sitting on the exceptional divisor.
Mutating, one can arrange them into a rectangular Lefschetz decomposition consisting of two blocks and then
apply the Theorem.
\end{remark}


\begin{thebibliography}{LLSvS13}

\bibitem[BD85]{beauville1985variete}
A.~Beauville and R.~Donagi.
\newblock {La vari{\'e}t{\'e} des droites d{\rq}une hypersurface cubique de
  dimension 4}.
\newblock {\em CR Acad. Sci. Paris S{\'e}r. I Math}, 301(14):703--706, 1985.

\bibitem[BO95]{bondal1995semiorthogonal}
A.~Bondal and D.~Orlov.
\newblock {Semiorthogonal decomposition for algebraic varieties}.
\newblock {\em Preprint alg-geom/9506012}, 1995.

\bibitem[Cas14]{casagrande2014rank}
C.~Casagrande.
\newblock {Rank $2$ quasiparabolic vector bundles on ${\mathbb{P}}^1$ and the
  variety of linear subspaces contained in two odd-dimensional quadrics}.
\newblock {\em Preprint arXiv:1410.3087}, 2014.

\bibitem[{\'E}la72]{elashvili1972canonical}
A.~{\'E}lashvili.
\newblock {Canonical form and stationary subalgebras of points of general
  position for simple linear Lie groups}.
\newblock {\em Functional Analysis and Its Applications}, 6(1):44--53, 1972.

\bibitem[HT84]{harris1984symmetric}
J.~Harris and L.~Tu.
\newblock {On symmetric and skew-symmetric determinantal varieties}.
\newblock {\em Topology}, 23(1):71--84, 1984.

\bibitem[KP14]{kuznetsov2014derived}
A.~Kuznetsov and A.~Perry.
\newblock {Derived categories of cyclic covers and their branch divisors}.
\newblock {\em Preprint arXiv:1411.1799}, 2014.

\bibitem[K{\"u}c95]{kuchle1995fano}
O.~K{\"u}chle.
\newblock {On Fano 4-folds of index 1 and homogeneous vector bundles over
  Grassmannians}.
\newblock {\em Mathematische Zeitschrift}, 218(1):563--575, 1995.

\bibitem[Kuz08]{kuznetsov2008derived}
A.~Kuznetsov.
\newblock {Derived categories of quadric fibrations and intersections of
  quadrics}.
\newblock {\em Advances in Mathematics}, 218(5):1340--1369, 2008.

\bibitem[Kuz10]{kuznetsov2010derived}
A.~Kuznetsov.
\newblock {Derived categories of cubic fourfolds}.
\newblock In {\em {Cohomological and geometric approaches to rationality
  problems}}, pages 219--243. Springer, 2010.

\bibitem[Kuz14]{kuznetsov2014semiorthogonal}
A.~Kuznetsov.
\newblock {Semiorthogonal decompositions in algebraic geometry}.
\newblock {\em Preprint arXiv:1404.3143}, 2014.

\bibitem[LLSvS13]{lehn2013twisted}
C.~Lehn, M.~Lehn, C.~Sorger, and D.~van Straten.
\newblock {Twisted cubics on cubic fourfolds}.
\newblock {\em Preprint arXiv:1305.0178}, 2013.

\bibitem[M15]{manivel15}
L.~Manivel.
\newblock {On Fano manifolds of Picard number one}. 
\newblock {\em Preprint arXiv:1502.00475}, 2015.

\end{thebibliography}

\end{document}